\newtheorem{thm}{Theorem}[section]
\newtheorem*{theorem*}{Theorem}
\newtheorem*{acknowledgement*}{Acknowledgement}
\newtheorem{cor}[thm]{Corollary}
\newtheorem{lem}[thm]{Lemma}
\newtheorem{prop}[thm]{Proposition}
\theoremstyle{definition}
\theoremstyle{remark}
\newtheorem{rem}[thm]{Remark}
\newtheorem{conj}[thm]{Conjecture}
\numberwithin{equation}{section}
\newcommand{\set}[1]{\left\{#1\right\}}
\newcommand{\Real}{\mathbb R}
\newcommand{\dist}[0]{\mathrm{dist}}
\title{Colding Minicozzi Entropy in Hyperbolic Space}
\author{Jacob Bernstein}
\address{Department of Mathematics, Johns Hopkins University, 3400 N. Charles Street, Baltimore, MD 21218}
\email{bernstein@math.jhu.edu}
\thanks{The author was partially supported by the NSF Grant  DMS-1609340 and DMS-1904674. }
\begin{document}

\begin{abstract}
	This note introduces a notion of entropy for submanifolds of hyperbolic space analogous to the one introduced by Colding and Minicozzi for submanifolds of Euclidean space. Several properties are proved for this quantity including monotonicity along mean curvature flow in low dimensions and a connection with the conformal volume.
\end{abstract}
\maketitle
\section{Introduction}
In \cite{Coldinga}, Colding and Minicozzi introduced the following functional on the space of $n$-dimensional submanifolds $\Sigma\subset \Real^{n+k}$,
$$
\lambda[\Sigma]=\sup_{\mathbf{x}_0\in \Real^{n+k}, \tau>0} (4\pi \tau)^{-\frac{n}{2}}\int_{\Sigma} {e^{-\frac{|\mathbf{x}(p)-\mathbf{x}_0|^2}{4\tau}}} dVol_{\Sigma}(p).
$$
Here, $(4\pi t)^{-\frac{n}{2}} {e^{-\frac{|\mathbf{x}|^2}{4t}}}$ is the heat kernel of $\Real^n$.  Observe that in the definition of $\lambda[\Sigma]$ this function is extended to all of $\Real^{n+k}$ in the obvious way.
Colding and Minicozzi called this quantity the entropy of $\Sigma$ and observed that, by Huisken's monotonicity formula \cite{HuiskenMon}, it is monotone along reasonable mean curvature flows.  That is, if $\set{\Sigma_t}_{t\in [0,T)}$ is a family of complete submanifolds of polynomial volume growth that satisfy
$$
\frac{d\mathbf{x}}{dt}=\mathbf{H}_{\Sigma_t},
$$
then $\lambda[\Sigma_t]$ is monotone non-increasing in $t$.  Here $\mathbf{H}_{\Sigma_t}=\Delta_{\Sigma_t} \mathbf{x}=-H_{\Sigma_t} \mathbf{n}_{\Sigma_t}$ is the mean curvature vector of $\Sigma_t$ and a submanifold, $\Sigma$, has polynomial volume growth if $Vol_{\Real^{n+k}}(\Sigma\cap B_R)\leq M (1+R)^M$ for some $M>0$.  This notion of entropy has been extensively studied in recent years.  See, for instance, \cite{BWIsotopy, BernsteinWang1, JZhu, CIMW, BernsteinWang2, BernsteinWang3, BWHausdorff, SWang, KetoverZhou}.

In this short note, we introduce an analogous quantity for submanifolds of hyperbolic space, $\mathbb{H}^n$, and show that, at least in low dimensions, it is monotone along mean curvature flow. To that end, let $H_n(t,p; t_0,p_0)$
be the heat kernel on $\mathbb{H}^n$ with singularity at $p=p_0$ at time $t=t_0$. 
That is, $H_n$ is the unique positive solution to
$$
\left\{\begin{array}{cc} \left( \frac{\partial}{\partial t}-\Delta_{\mathbb{H}^n}\right)H_n=0 & t>0\\
 \lim_{t\downarrow t_0} H_n =\delta_{p_0}.\end{array}\right.
$$
It follows from the symmetries of $\mathbb{H}^n$ that 
$$
H_n(t, p;t_0, p_0)=K_n(t-t_0,\dist_{\mathbb{H}^n}(p,p_0))
$$
where $K_n(t,\rho)$ is a positive function on $(0,\infty)\times (0,\infty)$ and $\dist_{\mathbb{H}^n}(p,p_0)$ is the hyperbolic distance between $p$ and $p_0$. 
For $(t,p)\in (-\infty, t_0)\times \mathbb{H}^{n+k}$ let 
$$
\Phi_n^{t_0,p_0}(t,p)=K_n(t_0-t,\dist_{\mathbb{H}^{n+k}}(p,p_0)).
$$
In particular,  $\Phi_n^{t_0,p_0}$ restricts to the backwards heat kernel that becomes singular at $(t_0,p_0)$ on any totally geodesic $\mathbb{H}^n\subset \mathbb{H}^{n+k}$ that goes through $p_0$. 

In analogy with the Euclidean setting, for any $n$-dimensional submanifold $\Sigma\subset \mathbb{H}^{n+k}$, define the \emph{hyperbolic entropy} of $\Sigma$ to be
$$
\lambda_{\mathbb{H}}[\Sigma]=\sup_{p_0\in \mathbb{H}^{n+k}, \tau>0} \int_{\Sigma} \Phi_n^{0,p_0}(-\tau,p) dVol_{\Sigma}(p)
$$
We remark that for any submanifold $\Sigma\subset \mathbb{H}^{n+k}$, $\lambda_{\mathbb{H}}[\Sigma]\geq 1$ and if $\mathbb{H}^n\subset \mathbb{H}^{n+k}$ is a totally geodesic copy of the $n$-dimensional hyperbolic plane, then $\lambda_{\mathbb{H}}[\mathbb{H}^n]=1$ -- see Lemma \ref{HyperplaneEntropyLem}. 

The main result of this paper is that, at least in sufficiently low dimensions, $\lambda_{\mathbb{H}}$ is monotone non-increasing along reasonable mean curvature flows -- i.e., flows of submanifolds of exponential volume growth.   Here a submanifold $\Sigma \subset \mathbb{H}^{n+k}$ has exponential volume growth if
$$
Vol_{\mathbb{H}^{n+k}}\left(\Sigma\cap B_{R}^{\mathbb{H}^{n+k}}(p_0)\right)\leq M e^{M R}
$$
for some $M>0$ and $p_0\in \mathbb{H}^{n+k}$.  In fact, at least in low dimensions, there is an analog of Huisken monotonicity.
\begin{thm}\label{MainThm}
	There is an $N\in [4, \infty]$ so that if $n<N$ and $\set{\Sigma_t}_{t\in [0,T)}$ is a smooth mean curvature flow of $n$-dimensional complete submanifolds $\Sigma_t\subset \mathbb{H}^{n+k}$ that have exponential volume growth, then, for $t_0\in (0,T]$, $p_0\in \mathbb{H}^{n+k}$ and $t\in (0,t_0)$,
	\begin{align*}
	\frac{d}{dt} \int_{\Sigma_t} &\Phi_n^{t_0,p_0}(t, p) dVol_{\Sigma_t}(p) \leq 0
	\end{align*}
   and one has strict inequality unless $\Sigma_t$ is a minimal submanifold that is a cone over $p_0$.
    
    As a consequence, $\lambda_{\mathbb{H}}[\Sigma_t]$ is non-increasing in $t$.
\end{thm}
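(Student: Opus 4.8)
The plan is to run Huisken's monotonicity computation with the flat heat kernel replaced by $\Phi=\Phi_n^{t_0,p_0}$ and the flat Laplacian replaced by $\Delta_{\mathbb{H}^{n+k}}$; the dimension restriction $n<N$ will enter only through a single scalar inequality for the radial profile $K_n$ at the very end. Fix $t_0,p_0$, write $\rho(p)=\dist_{\mathbb{H}^{n+k}}(p,p_0)$, $\tau=t_0-t$, $\Phi(t,p)=K_n(\tau,\rho(p))$, and let $\bar\nabla,\bar\nabla^2$ denote the gradient and Hessian on $\mathbb{H}^{n+k}$. For a smooth mean curvature flow with velocity $\mathbf{H}=\mathbf{H}_{\Sigma_t}$ one has $\frac{\partial}{\partial t}\,dVol_{\Sigma_t}=-|\mathbf{H}|^2\,dVol_{\Sigma_t}$ and $\frac{d}{dt}\Phi(t,p(t))=\partial_t\Phi+\langle\bar\nabla\Phi,\mathbf{H}\rangle$, hence
\begin{equation*}
\frac{d}{dt}\int_{\Sigma_t}\Phi\,dVol_{\Sigma_t}=\int_{\Sigma_t}\big(\partial_t\Phi+\langle\bar\nabla\Phi,\mathbf{H}\rangle-|\mathbf{H}|^2\Phi\big)\,dVol_{\Sigma_t}.
\end{equation*}
Differentiation under the integral, and the identity $\int_{\Sigma_t}\Delta_{\Sigma_t}\Phi\,dVol_{\Sigma_t}=0$ used below, are justified by the exponential volume growth hypothesis together with the Gaussian-in-$\rho$ decay of $K_n(\tau,\cdot)$ and of $\partial_\rho K_n(\tau,\cdot)$ for fixed $\tau$: testing against cut-offs $\eta_R$ supported in $B^{\mathbb{H}^{n+k}}_{2R}(p_0)$ with $|\bar\nabla\eta_R|\le C/R$ and letting $R\to\infty$, the Gaussian factor dominates $e^{MR}$.

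Next I would add and subtract $\Delta_{\Sigma_t}\Phi$, use $\int_{\Sigma_t}\Delta_{\Sigma_t}\Phi=0$, and compute $\partial_t\Phi+\Delta_{\Sigma_t}\Phi$ pointwise. With $a=|(\bar\nabla\rho)^\top|^2$ (so $|(\bar\nabla\rho)^\perp|^2=1-a$), the Gauss-type identity $\Delta_{\Sigma_t}\Phi=\operatorname{tr}_{T\Sigma_t}\bar\nabla^2\Phi+\langle\bar\nabla\Phi,\mathbf{H}\rangle$, the Hessian of the distance function in hyperbolic space $\bar\nabla^2\rho=\coth\rho\,(\bar g-d\rho\otimes d\rho)$, and the radial heat equation $\partial_\tau K_n=\partial_\rho^2 K_n+(n-1)\coth\rho\,\partial_\rho K_n$ combine to give
\begin{equation*}
\partial_t\Phi+\Delta_{\Sigma_t}\Phi=(1-a)\big(\coth\rho\,\partial_\rho K_n-\partial_\rho^2 K_n\big)+\partial_\rho K_n\,\langle(\bar\nabla\rho)^\perp,\mathbf{H}\rangle .
\end{equation*}
Substituting into the formula above and completing the square in $\mathbf{H}$ (using $\langle\bar\nabla\Phi,\mathbf{H}\rangle=\partial_\rho K_n\langle(\bar\nabla\rho)^\perp,\mathbf{H}\rangle$) yields
\begin{equation*}
\frac{d}{dt}\int_{\Sigma_t}\Phi\,dVol_{\Sigma_t}=-\int_{\Sigma_t}\Phi\Big|\mathbf{H}-\tfrac{\partial_\rho K_n}{K_n}(\bar\nabla\rho)^\perp\Big|^2 dVol_{\Sigma_t}+\int_{\Sigma_t}|(\bar\nabla\rho)^\perp|^2\,E_n(\tau,\rho)\,dVol_{\Sigma_t},
\end{equation*}
where $E_n(\tau,\rho)=\coth\rho\,\partial_\rho K_n-\partial_\rho^2 K_n+(\partial_\rho K_n)^2/K_n$. (Replacing $\coth\rho$ by $\rho^{-1}$ gives $E_n\equiv0$, which is exactly why Huisken's identity is an equality in the Euclidean case.)

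The remaining point — and the \emph{main obstacle} — is to show $E_n\le0$ on $(0,\infty)^2$, the only place the dimension enters. Dividing by $K_n>0$ and passing to $\log K_n$ shows $E_n\le0$ is equivalent to $\partial_\rho^2\log K_n-\coth\rho\,\partial_\rho\log K_n\ge0$, i.e. to $\rho\mapsto(-\partial_\rho\log K_n)/\sinh\rho$ being non-increasing; by the recursion $K_{n+2}(\tau,\rho)=-\tfrac{1}{2\pi}e^{-n\tau}(\sinh\rho)^{-1}\partial_\rho K_n(\tau,\rho)$ this is in turn equivalent to: for each $\tau>0$, $\rho\mapsto K_{n+2}(\tau,\rho)/K_n(\tau,\rho)$ is non-increasing on $(0,\infty)$. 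For small $n$ this follows from the explicit formulas: $K_1(\tau,\rho)=(4\pi\tau)^{-1/2}e^{-\rho^2/4\tau}$ and $K_3(\tau,\rho)=(4\pi\tau)^{-3/2}(\rho/\sinh\rho)e^{-\tau-\rho^2/4\tau}$ make $K_3/K_1$ and $K_5/K_3$ manifestly non-increasing, while $K_2,K_4$ are treated via their integral representations; this yields the asserted $N\ge4$. Once $E_n\le0$ — indeed $E_n<0$ for $\rho>0$ — the differential inequality follows, and equality at some $t$ forces both $\mathbf{H}=\tfrac{\partial_\rho K_n}{K_n}(\bar\nabla\rho)^\perp$ and $(\bar\nabla\rho)^\perp\equiv0$ on $\Sigma_t$; the second condition says $\Sigma_t$ is ruled by geodesics through $p_0$, whence the first gives $\mathbf{H}\equiv0$, i.e. $\Sigma_t$ is a minimal cone over $p_0$. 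Finally, $\lambda_{\mathbb{H}}$-monotonicity is the usual consequence: applying the inequality to $\Sigma_0$ with $t_0=\tau$ and arbitrary $p_0$ gives $\int_{\Sigma_\sigma}K_n(\tau-\sigma,\dist_{\mathbb{H}^{n+k}}(\cdot,p_0))\le\int_{\Sigma_0}K_n(\tau,\dist_{\mathbb{H}^{n+k}}(\cdot,p_0))\le\lambda_{\mathbb{H}}[\Sigma_0]$ for $0<\sigma<\tau$, and one takes the supremum over $p_0,\tau$.
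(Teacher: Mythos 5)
Your derivation of the weighted monotonicity identity is correct and is essentially the paper's Proposition \ref{MonotonicityFormulaProp} in different notation: your error term $\int_{\Sigma_t}|(\bar\nabla\rho)^\perp|^2E_n$ is exactly $-\int_{\Sigma_t}Q(\Phi_n^{t_0,p_0})\Phi_n^{t_0,p_0}$, and the reduction of everything to the single scalar inequality $\partial_\rho^2\log K_n-\coth(\rho)\,\partial_\rho\log K_n\geq 0$ is the same reduction the paper makes. The problem is that you have not actually proved that inequality, and it is the real content of the theorem (it is where the constant $N\geq 4$ comes from). Two concrete gaps: (i) you call $K_5/K_3$ ``manifestly non-increasing,'' but computing it from the explicit $K_3$ gives, up to a constant, $\frac{\rho\cosh\rho-\sinh\rho}{\rho\sinh^2\rho}+\frac{\rho}{2t\sinh\rho}$, and the monotonicity of the first summand is not manifest --- the paper needs the identity $\partial_\rho^2\log K_3-\coth(\rho)\partial_\rho\log K_3=\frac{\rho\coth\rho-1}{2t}+\frac{1}{\sinh^2\rho}+\coth^2\rho-\frac{1}{\rho^2}-\frac{\coth\rho}{\rho}$ and a power-series verification that the time-independent part is nonnegative; (ii) the case $n=2$, which you defer to ``integral representations,'' is genuinely nontrivial and is where the paper's key lemma lives: Proposition \ref{KnProp} shows the convexity descends from $K_n$ to $K_{n-1}$ by differentiating the integral recurrence \eqref{IntegralEqn} under the integral sign, substituting the hypothesis $\partial_s^2K_n-\coth(s)\partial_sK_n\geq(\partial_sK_n)^2/K_n$, and applying Cauchy--Schwarz to the two resulting integrals to recover $(\partial_\rho K_{n-1})^2$. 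Without some version of that argument (or an explicit computation with $K_2$, which has no elementary closed form), the case $n=2$ is unproven and you cannot assert $N\geq 4$.

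Two smaller points. The strictness of $E_n<0$ for $\rho>0$, which you need for the rigidity statement, also has to be tracked through whatever proof of the scalar inequality you supply; the paper does this inside the induction. And the step ``$(\bar\nabla\rho)^\perp\equiv 0$ on $\Sigma_t$ implies $\Sigma_t$ is a cone over $p_0$'' deserves an argument: the paper's Lemma \ref{ConeLem} first uses completeness to show $\Sigma_t$ must contain $p_0$ (otherwise $\rho$ attains a positive minimum on a component, forcing $|\nabla^\perp_{\Sigma_t}\rho|=1$ there), and then runs an ODE comparison for $\cosh\rho$ along intrinsic geodesics to conclude they are ambient minimizing geodesics through $p_0$. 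Your phrase ``ruled by geodesics through $p_0$'' is the right conclusion but skips the part of the argument that rules out components missing $p_0$.
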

Here a submanifold, $\Sigma$, is a cone over a point $p_0$, if,  for every $p\in \Sigma$, the minimizing geodesic connecting $p_0$ to $p$ in $\mathbb{H}^{n+k}$ is contained in $\Sigma$.  One readily checks that the only smooth $n$-dimensional cone over $p_0$ is a totally geodesic copy of $\mathbb{H}^n$ containing $p_0$.

\begin{rem}
	In fact,  monotonicity holds provided the heat kernel of hyperbolic space, $H_n$, satisfies the following convexity property:
	$$
	 \nabla^2_{\mathbb{H}^n} \log H_n(t,p; t_0, p_0) - \coth(\rho(p)) \partial_\rho \log H_n(t,p; t_0, p_0) g_{\mathbb{H}^n}\geq 0
	$$
	for all $p\in \mathbb{H}^n$ and $t>t_0$.  Here $\rho(p)=\dist_{\mathbb{H}^n}(p,p_0)$ is the radial distance from $p_0$.  This is equivalent to
	$$
	\partial^2_\rho \log K_n -\coth(\rho)\partial_\rho \log K_n \geq 0.
	$$
	As the $K_n$ can be explicitly computed, one could, in principle, check this condition for any $n$ and it seems reasonable to conjecture that the convexity, and hence Theorem \ref{MainThm}, holds for $N=\infty$.  However, the formulas become unwieldy for $n\geq 4$ and so we do not pursue this. See \cite{daviesHeatKernelBounds1988} for an extensive study of properties of $H_n$ and $K_n$.
\end{rem}
	An immediate consequence of this result, the classification of low entropy self-shrinkers in $\Real^3$ from \cite{BernsteinWang2} and the fact that the mean curvature flow of a closed hypersurface in hyperbolic space forms a singularity in finite time, is a sharp lower bound on the hyperbolic entropy of closed surfaces in $\mathbb{H}^3$.
\begin{cor}
	If $\Sigma\subset \mathbb{H}^3$ is a closed surface, then $\lambda_{\mathbb{H}}[\Sigma]> \lambda[\mathbb{S}^2]$.  Moreover, if $\Sigma$ has positive genus, then $\lambda_{\mathbb{H}}[\Sigma]> \lambda[\mathbb{S}^1\times \Real]$.
\end{cor}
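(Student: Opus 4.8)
The plan is to evolve $\Sigma$ by mean curvature flow, to note that the flow becomes singular in finite time, to take a parabolic blow-up at a singular point so as to extract a Euclidean self-shrinker, and then to feed the monotonicity of Theorem \ref{MainThm} together with the classification of low-entropy self-shrinkers in $\rth$ from \cite{BernsteinWang2} into this picture. First I would let $\set{\Sigma_t}_{t\in[0,T)}$ be the mean curvature flow with $\Sigma_0=\Sigma$, where $T$ is the first singular time. As $\Sigma$ is closed it is enclosed by a large geodesic sphere of $\mathbb{H}^3$, which itself shrinks to a point under the flow in finite time, so the avoidance principle forces $T<\infty$ with the flow genuinely singular at $T$. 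Fixing a singular point $p_0$, put $\Theta(t)=\int_{\Sigma_t}\Phi_2^{T,p_0}(t,p)\,dVol_{\Sigma_t}(p)$. Since $n=2<N$ and a closed surface trivially has exponential volume growth, Theorem \ref{MainThm} applies and shows $\Theta$ is non-increasing on $(0,T)$, so
$$\Theta(t)\le\Theta(0)=\int_{\Sigma}K_2(T,\dist_{\mathbb{H}^3}(p,p_0))\,dVol_\Sigma(p)\le\lambda_{\mathbb{H}}[\Sigma]$$
for all $t\in(0,T)$, and the limit $\Theta_\infty:=\lim_{t\uparrow T}\Theta(t)$ exists with $\Theta_\infty\le\lambda_{\mathbb{H}}[\Sigma]$.

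Next I would carry out the blow-up at $(T,p_0)$. Rescaling the flow parabolically by a sequence $\lambda_j\to\infty$ and using $\exp_{p_0}$ to realize a fixed neighborhood of $p_0$ as a ball in $\rth$, the rescaled hyperbolic metrics converge smoothly to the flat metric (their curvature rescales to zero) and $\Phi_2^{T,p_0}$ converges to the Euclidean backward heat kernel $(4\pi\tau)^{-1}e^{-\frac{|\xX|^2}{4\tau}}$. Using the bound $\lambda_{\mathbb{H}}[\Sigma]$ on the entropy and compactness for Brakke flows, the rescaled flows subconverge to a self-similarly shrinking Euclidean limit flow whose time $-1$ slice is a self-shrinker $\Sigma_\infty\subset\rth$; the monotonicity of $\Theta$ and the heat-kernel convergence identify $\Theta_\infty$ with the Gaussian density of this limit flow, which for a self-shrinker equals its entropy (Colding--Minicozzi \cite{Coldinga}), so $\lambda[\Sigma_\infty]=\Theta_\infty\le\lambda_{\mathbb{H}}[\Sigma]$. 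Because $p_0$ is a genuine singular point, White's local regularity theorem gives $\Theta_\infty>1$, so $\Sigma_\infty$ is not a multiplicity-one plane; and if its density were $\ge 2$ then $\lambda_{\mathbb{H}}[\Sigma]\ge\Theta_\infty\ge2>\lambda[\mathbb{S}^1\times\Real]$ and both conclusions would follow at once, so I may assume $\Sigma_\infty$ is a smooth embedded multiplicity-one non-flat self-shrinker.

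Now I would invoke the classification from \cite{BernsteinWang2}: a self-shrinker in $\rth$ with entropy at most $\lambda[\mathbb{S}^1\times\Real]$ is a plane, a round sphere, or a round cylinder, while $\lambda[\mathbb{S}^2]<\lambda[\mathbb{S}^1\times\Real]$ and the plane has entropy $1$. Since $\Sigma_\infty$ is non-flat, $\lambda[\Sigma_\infty]\ge\lambda[\mathbb{S}^2]$, hence $\lambda_{\mathbb{H}}[\Sigma]\ge\lambda[\mathbb{S}^2]$; were equality to hold then $\Theta(0)=\Theta_\infty=\lambda[\mathbb{S}^2]$, so $\Theta$ would be constant on $(0,T)$, and the rigidity clause of Theorem \ref{MainThm} would force each $\Sigma_t$ to be a minimal cone over $p_0$, i.e.\ a totally geodesic $\mathbb{H}^2$, which is not closed---a contradiction. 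Therefore $\lambda_{\mathbb{H}}[\Sigma]>\lambda[\mathbb{S}^2]$. For the positive-genus statement, suppose toward a contradiction that $\lambda_{\mathbb{H}}[\Sigma]\le\lambda[\mathbb{S}^1\times\Real]$. Since $\lambda_{\mathbb{H}}$ is non-increasing along the flow and, by the Brakke version of the monotonicity, along its Brakke continuation past $T$, every tangent flow at every singular point of the (continued) flow has density below $\lambda[\mathbb{S}^1\times\Real]<2$, hence is a multiplicity-one self-shrinker of entropy at most $\lambda[\mathbb{S}^1\times\Real]$, hence---being singular---a round sphere or a round cylinder. The topological argument of \cite{BernsteinWang2}, which shows that a closed surface all of whose mean curvature flow singularities are modeled on round spheres and cylinders is isotopic to a round $2$-sphere and whose singularity analysis is local enough to transfer to surfaces in $\mathbb{H}^3$, then forces $\Sigma$ to have genus $0$, contradicting the hypothesis. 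Hence $\lambda_{\mathbb{H}}[\Sigma]>\lambda[\mathbb{S}^1\times\Real]$.

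The step I expect to be the main obstacle is the blow-up: one must show that a parabolic rescaling of a hyperbolic mean curvature flow converges to a Euclidean self-similarly shrinking Brakke flow with \emph{no drop in density}, so that $\Theta_\infty$ is realized exactly as the entropy of an honest self-shrinker in $\rth$; this combines the flattening of the metric under rescaling, the convergence of $\Phi_2^{T,p_0}$ to the Euclidean backward heat kernel, and Brakke-flow compactness together with the entropy bound. The secondary technical point is transporting the topological arguments of \cite{BernsteinWang2} to the hyperbolic background for the genus statement, which goes through essentially verbatim only because all of the singularity models that appear are genuinely Euclidean.
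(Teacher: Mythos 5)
Your proposal follows exactly the route the paper has in mind (the paper only sketches it in the introduction): flow $\Sigma$ by mean curvature, force a finite-time singularity by the avoidance principle with shrinking geodesic spheres, use the monotonicity of Theorem \ref{MainThm} to bound the Gaussian density of a Euclidean tangent flow by $\lambda_{\mathbb{H}}[\Sigma]$, invoke the classification of low entropy self-shrinkers in $\Real^3$, and get strictness from the rigidity clause (a closed surface is never a cone over a point). The first claim is handled correctly. The one soft spot is the positive-genus step: you black-box the full isotopy/Schoenflies machinery of Bernstein--Wang and assert that it ``transfers'' to $\mathbb{H}^3$, which is much more than is needed and is not justified as stated. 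A more elementary closing argument stays entirely within your framework: apply the flow to the positive-genus component and look only at its \emph{first} singular time; under the assumption $\lambda_{\mathbb{H}}[\Sigma]\leq \lambda[\mathbb{S}^1\times\Real]$ the tangent flow there is a multiplicity-one sphere or cylinder, and it cannot be the compact sphere, since by White's regularity a spherical tangent flow forces the entire component to be a small graph over a round sphere shortly before the singular time, hence diffeomorphic to $\mathbb{S}^2$, contradicting positive genus. So the tangent flow is a cylinder, $\Theta_\infty=\lambda[\mathbb{S}^1\times\Real]$, and strict monotonicity gives $\lambda_{\mathbb{H}}[\Sigma]>\lambda[\mathbb{S}^1\times\Real]$ directly, with no need to continue the flow past the first singular time or to import any topological classification of the flow in the hyperbolic background.
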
	
\begin{rem}
	One observes that $\lim_{r\downarrow 0} \lambda_{\mathbb{H}}[\partial B_{r}^{\mathbb{H}^3}(p)] =\lambda[\mathbb{S}^2]$ and so the lower bound is sharp even though it is not achieved.  
\end{rem}
In a similar manner, one may readily adapt the arguments from \cite{BernsteinWang1}, \cite{JZhu}, \cite{BWIsotopy} to see that for any closed hypersurface $\Sigma\subset \mathbb{H}^{n+1}$ one has $\lambda_{\mathbb{H}} [\Sigma]> \lambda[\mathbb{S}^n]$ and if $\Sigma\subset \mathbb{H}^4$ is closed and $\lambda_{\mathbb{H}}[\Sigma]\leq \lambda[\mathbb{S}^2\times \Real]$, then $\Sigma$ is isotopic to the boundary of a unit ball in $\mathbb{H}^4$.

We remark that in his thesis \cite{zhuGeometricVariationalProblems2018}, Zhu defines a notion of entropy, $\lambda_{\mathbb{S}}[\Sigma]$, of an $n$-dimensional submanifold, $\Sigma$, of $\mathbb{S}^{n+k}$ that is monotone along mean curvature flow in the sphere.  This is done by using the usual of embedding $\mathbb{S}^{n+k}$ into $\mathbb{R}^{n+k+1}$ to think of $\Sigma$ as a $n$-dimensional submanifold of $\Real^{n+k+1}$. He then defines the spherical entropy of $\Sigma$ to be its usual entropy, $\lambda[\Sigma]$, computed in the ambient Euclidean space.  We also refer the reader to \cite{sunEntropyClosedManifold2019} where an entropy is defined in arbitrary (closed) ambient manifolds that is monotone when the metric has parallel Ricci curvature and non-negative sectional curvature.  See also \cite{mramorEntropyGenericMean2018} where a notion of entropy in arbitrary Riemannian manifolds is obtained by isometrically embedding the ambient manifold in a large Euclidean space.

As a further application of Theorem \ref{MainThm}, we consider its implications for complete minimal submanifolds in hyperbolic space.  Specifically, we consider such minimal submanifolds that have a $C^1$-regular asymptotic boundary on the ideal boundary, $\partial_\infty \mathbb{H}^n$, of hyperbolic space. We remark that our computation seems to be related to the notion of \emph{holographic entanglement entropy} introduced by Ryu and Takayanagi in \cite{Ryu2006, Ryu2006a} -- see also \cite{Alexakis, Alexakis2010} for a mathematical treatment closer to our own. 

More precisely, recall that the Poincar\'{e} ball model of hyperbolic space $\mathbb{H}^n$ is the unit ball $\mathbb{B}^n=\set{\mathbf{x}: |\mathbf{x}|<1}\subset \Real^n$ with the Poincar\'{e} metric
$$
g_P=4\frac{ d\mathbf{x}\otimes d\mathbf{x}}{(1-|\mathbf{x}|^2)^2}.
$$
The ideal boundary,  $\partial_\infty \mathbb{H}^{n}$ of $\mathbb{H}^{n}$ is then identified with the sphere $\mathbb{S}^{n-1}=\partial \mathbb{B}^n$.   A $l$-dimensional submanifold $\Sigma\subset \mathbb{H}^{n}$ has \emph{$C^m$-regular asymptotic boundary}, provided that, after identifying with $\mathbb{B}^n$,  $\bar{\Sigma}\subset \bar{\mathbb{B}}^n$ is a $C^m$-regular submanifold with boundary and $\bar{\Sigma}$ meets $\partial\bar{\mathbb{B}}^n=\mathbb{S}^{n-1}$ orthogonally.  For such a $\Sigma$, set $\partial_\infty \Sigma=\partial \bar{\Sigma}=\bar{\Sigma}\cap  \mathbb{S}^{n-1}$.  Using the identification, we may think of $\partial_\infty \Sigma $ as a $C^m$-regular $(l-1)$-dimensional submanifold of $\partial_\infty \mathbb{H}^n$.  

As the isometries of $\mathbb{H}^n$ correspond to M\"{o}bius transforms of $\mathbb{B}^n$, this means $\partial_\infty \mathbb{H}^{n}$ possesses a natural conformal structure, but not a unique Riemannian structure.  In particular,  there is not a well defined notion of volume for submanifolds $\Gamma\subset \partial_\infty \mathbb{H}^n$.  However, there is a well defined notion of {conformal volume} in the spirit of Li and Yau \cite{Li1982}. 
Specifically, given a $l$-dimensional submanifold $\Gamma\subset \partial_\infty \mathbb{H}^{n}$, define the \emph{conformal volume} of $\Gamma$ to be
$$
\lambda_{c}[\Gamma]=\sup_{\psi\in \mathrm{Mob}(\mathbb{S}^{n-1})} Vol_{\mathbb{S}^{n-1}}(\psi(\Gamma)).
$$
Here $\mathrm{Mob}( \mathbb{S}^{n-1})$. is the group of M\"obius transformations of $\mathbb{S}^{n-1}$ and we have used the identification between $\mathbb{S}^{n-1}$ and $\partial_\infty \mathbb{H}^n$.  This is a well-defined geometric quantity and is essentially the $(n-1)$-conformal volume of the embedding of $\Gamma$ into $\mathbb{S}^{n-1}$ defined in \cite{Li1982}.

Given a complete $n$-dimensional submanifold with $C^1$-regular asymptotic boundary there is a natural relationship between its hyperbolic entropy and the conformal volume of its asymptotic boundary.
\begin{thm}\label{MobEntropyThm}
	If $\Sigma\subset \mathbb{H}^{n+k}$ is a complete $n$-dimensional submanifold with $C^1$-regular asymptotic boundary, $\partial_\infty \Sigma=\Gamma\subset \partial_\infty \mathbb{H}^{n+k}$, then
	$$
    	\lambda_{\mathbb{H}}[\Sigma]\geq\frac{\lambda_{c}[\Gamma]}{Vol_{\Real^n}(\mathbb{S}^{n-1})}.
	$$
	If, in addition,   $\Sigma$ is minimal and $n<N$ where $N$ is given by Theorem \ref{MainThm}, then this inequality is an equality.
\end{thm}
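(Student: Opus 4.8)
\emph{Proof proposal.} The plan is to analyse the integrals defining $\lambda_{\mathbb{H}}$ in the limit $\tau\to\infty$ and to show that this limit computes (a M\"obius image of) the conformal volume of $\Gamma$; the minimality hypothesis then enters only through Theorem \ref{MainThm}, which for a static flow makes the $\tau$-integral monotone, so that its supremum over $\tau$ is attained in that limit. Write $\omega_{n-1}=Vol_{\Real^n}(\mathbb{S}^{n-1})$. I will use two analytic facts about $K_n$: first, since $K_n(\sigma,\dist_{\mathbb{H}^n}(\cdot,p_0))$ is a probability density on the stochastically complete manifold $\mathbb{H}^n$, one has $\omega_{n-1}\int_0^\infty K_n(\sigma,\rho)\sinh^{n-1}\rho\,d\rho=1$ for every $\sigma>0$; second, the probability measures $d\mu_\sigma=\omega_{n-1}K_n(\sigma,\rho)\sinh^{n-1}\rho\,d\rho$ on $(0,\infty)$ satisfy $\mu_\sigma([0,R])\to0$ as $\sigma\to\infty$ for each fixed $R$, because $\sup_\rho K_n(\sigma,\rho)=K_n(\sigma,0)$ (the on-diagonal value, by the Cauchy--Schwarz/semigroup inequality) and this tends to $0$ by the spectral gap $(n-1)^2/4$ of $\mathbb{H}^n$. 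Thus all the mass of $\mu_\sigma$ escapes to $\rho=\infty$ as $\sigma\to\infty$.

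The geometric heart of the argument is a volume-form estimate. Fix $p_0\in\mathbb{H}^{n+k}$ and an isometry $\Psi$ of $\mathbb{H}^{n+k}$ with $\Psi(p_0)=o$, the centre of the Poincar\'e ball; replacing $\Sigma$ by $\Psi(\Sigma)$ changes neither $\lambda_{\mathbb{H}}$ nor the hypotheses, replaces $\Gamma$ by $\psi(\Gamma)$ where $\psi\in\mathrm{Mob}(\mathbb{S}^{n+k-1})$ is the ideal boundary map of $\Psi$, and reduces the computation of $\int_\Sigma K_n(\sigma,\dist(\cdot,p_0))\,dVol_\Sigma$ to the case $p_0=o$. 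In geodesic polar coordinates about $o$, the $C^1$-regularity of $\bar{\Sigma}$ and its orthogonal contact with $\partial_\infty\mathbb{H}^{n+k}$ force the tangent planes $T_p\Sigma$, as $p\to\partial_\infty\mathbb{H}^{n+k}$, to converge to planes containing the radial direction; hence the $\Sigma$-gradient of the distance function $\rho$ has length tending to $1$, its flow lines parametrise $\Sigma\cap\{\rho>R\}$ for $R$ large by $(\rho,\theta)\in(R,\infty)\times\Gamma$ with $\dist(\cdot,o)=\rho$, and the slices $\Sigma\cap\partial B_\rho^{\mathbb{H}^{n+k}}(o)$, rescaled by the intrinsic radius $\sinh\rho$ of the round geodesic sphere $\partial B_\rho$, converge in $C^1$ to $\Gamma\subset\mathbb{S}^{n+k-1}$. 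Consequently $dVol_\Sigma=(1+E(\rho,\theta))\sinh^{n-1}\rho\,d\rho\,d\mathrm{vol}_\Gamma(\theta)$ with $\sup_{\theta\in\Gamma}|E(\rho,\theta)|\to0$ as $\rho\to\infty$, uniformly since $\Gamma$ is compact. Splitting $\int_\Sigma K_n(\sigma,\rho)\,dVol_\Sigma$ at $\rho=R$, bounding the inner part by $Vol_\Sigma(\Sigma\cap B_R^{\mathbb{H}^{n+k}}(o))\sup_{\rho\le R}K_n(\sigma,\rho)$, and using the two analytic facts for the outer part, one obtains, after letting first $\sigma\to\infty$ and then $R\to\infty$,
$$
\lim_{\sigma\to\infty}\int_\Sigma K_n(\sigma,\dist(p,p_0))\,dVol_\Sigma(p)=\frac{Vol_{\mathbb{S}^{n+k-1}}(\psi(\Gamma))}{\omega_{n-1}}.
$$

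The two conclusions follow quickly. For the inequality, bound $\lambda_{\mathbb{H}}[\Sigma]$ from below by the supremum over $p_0$ of the left-hand side above, hence by $\sup_{p_0}Vol_{\mathbb{S}^{n+k-1}}(\psi(\Gamma))/\omega_{n-1}$; since any M\"obius transformation of $\mathbb{S}^{n+k-1}$ is a rotation composed with the ideal boundary map of an isometry taking a suitable $p_0$ to $o$, and rotations preserve $(n-1)$-volume on $\mathbb{S}^{n+k-1}$, this supremum equals $\lambda_c[\Gamma]/\omega_{n-1}$. For the equality, assume $\Sigma$ minimal and $n<N$. Then $\Sigma$ is a smooth static mean curvature flow, and the volume-form estimate gives $Vol_{\mathbb{H}^{n+k}}(\Sigma\cap B_R^{\mathbb{H}^{n+k}}(p_0))=O(e^{(n-1)R})$, so $\Sigma$ has exponential volume growth and Theorem \ref{MainThm} applies. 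Rewriting its monotonicity with $\sigma=t_0-t$ and $t_0$ arbitrary shows that $\sigma\mapsto\int_\Sigma K_n(\sigma,\dist(\cdot,p_0))\,dVol_\Sigma$ is non-decreasing, so its supremum over $\sigma>0$ is its $\sigma\to\infty$ limit; combining with the displayed limit,
$$
\lambda_{\mathbb{H}}[\Sigma]=\sup_{p_0}\ \lim_{\sigma\to\infty}\int_\Sigma K_n(\sigma,\dist(p,p_0))\,dVol_\Sigma(p)=\sup_{p_0}\frac{Vol_{\mathbb{S}^{n+k-1}}(\psi(\Gamma))}{\omega_{n-1}}=\frac{\lambda_c[\Gamma]}{\omega_{n-1}}.
$$

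I expect the volume-form estimate to be the main obstacle: one must convert the $C^1$-regularity and orthogonal contact with $\partial_\infty\mathbb{H}^{n+k}$ into the multiplicative statement $dVol_\Sigma=(1+o(1))\sinh^{n-1}\rho\,d\rho\,d\mathrm{vol}_\Gamma$ with error uniform over $\Gamma$. The delicate point is that a naive $(\rho,\theta)$-parametrisation can have a radial tangent vector whose angular component is only $O(e^{-\rho})$, which $\sinh^2\rho$ would amplify to size $O(1)$; one must therefore straighten using the gradient flow of the distance function (whose $\Sigma$-gradient genuinely becomes radial in the limit) rather than an arbitrary graph, and keep the error uniform in $\theta$ so as to interchange the limits $\sigma\to\infty$ and $\rho\to\infty$. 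The remaining ingredients -- positivity and the total-mass identity of $K_n$ on $\mathbb{H}^n$, the spectral-gap decay of $K_n(\sigma,0)$, and the monotonicity from Theorem \ref{MainThm} -- are standard or already available.
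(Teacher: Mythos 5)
Your proposal is correct and follows essentially the same route as the paper: compute the $\sigma\to\infty$ limit of $\int_\Sigma K_n(\sigma,\dist(\cdot,p_0))\,dVol_\Sigma$ via the asymptotic volume form $(1+o(1))\sinh^{n-1}\rho\,d\rho\,d\mathrm{vol}_\Gamma$ (the paper's Lemma \ref{BoundaryLem} plus the co-area formula with $|\nabla_\Sigma\rho|\to 1$), normalize using the fact that the integral over a totally geodesic $\mathbb{H}^n$ through $p_0$ is identically $1$ while the mass on bounded sets escapes by \eqref{TimeDecayEqn}, identify the supremum over $p_0$ with $\lambda_c[\Gamma]$, and obtain equality in the minimal case from the static-flow monotonicity of Theorem \ref{MainThm}. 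Your phrasing via stochastic completeness and the spectral-gap decay of $K_n(\sigma,0)$ is just a repackaging of the paper's comparison with $\mathbb{H}^n$ and estimate \eqref{TimeDecayEqn}, so no substantive difference remains.
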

This is analogous to the relationship between the entropy of an asymptotically conical self-expander and the entropy of its asymptotic cone \cite[Lemma 3.5]{BWCompactness}. 
Together with the main result \cite{BWTopUniq} about topological properties of low entropy self-expanders, this suggests the  following conjecture:
\begin{conj}\label{MainConj}
	  If $\Gamma\subset \mathbb{S}^{2}$ is a closed curve with $\lambda_{c}[\Gamma]\leq \lambda[\mathbb{S}^{1}\times \Real]$ and $\Sigma_1,\Sigma_2\subset \mathbb{H}^{3}$ are  minimal surfaces with $C^1$-regular asymptotic boundaries that satisfy $\partial_\infty \Sigma_1=\partial_\infty \Sigma_2=\Gamma$, then $\Sigma_1$ is isotopic to $\Sigma_2$. 
\end{conj}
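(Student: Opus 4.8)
The plan is to transpose the argument of \cite{BWTopUniq} for low-entropy self-expanders to the present setting, with the mean curvature flow of $\mathbb{H}^3$ and its monotonicity (Theorem \ref{MainThm}) playing the role the expander flow plays there, and the asymptotic boundary $\Gamma$ playing the role of the asymptotic cone. The first step is to convert the hypothesis on $\Gamma$ into an entropy bound on the surfaces. Since each $\Sigma_i$ is minimal with $C^1$-regular asymptotic boundary $\Gamma$ and $n=2<N$, the equality case of Theorem \ref{MobEntropyThm} gives $\lambda_{\mathbb{H}}[\Sigma_i]=\lambda_c[\Gamma]/Vol_{\Real^2}(\mathbb{S}^1)$, so the conformal-volume hypothesis places both surfaces in the low-entropy regime governed by the cylinder threshold $\lambda[\mathbb{S}^1\times\Real]$; this is precisely the regime in which the flow's singularity models are classified.

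Next I would work with the weak (level-set and Brakke) mean curvature flow in $\mathbb{H}^3$ that fixes the asymptotic boundary $\Gamma$ on $\partial_\infty\mathbb{H}^3$, for which the minimal surfaces are exactly the stationary solutions, and along which Theorem \ref{MainThm} shows $\lambda_{\mathbb{H}}$ is non-increasing. At any putative singular point I would parabolically rescale: in the blow-up the hyperbolic metric converges to the Euclidean one and the weight $\Phi_2^{t_0,p_0}$ converges to the Euclidean Gaussian, so every tangent flow is a Euclidean self-shrinker, whose Colding--Minicozzi entropy is bounded by the non-increasing hyperbolic entropy of the flow and hence, for flows initiated in the low-entropy class, by $\lambda[\mathbb{S}^1\times\Real]$.

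By the classification of low-entropy self-shrinkers in $\Real^3$ of \cite{BernsteinWang2}, such a shrinker is a plane (a regular point) or a round $\mathbb{S}^2$, with the cylinder appearing only at the threshold; thus the flow is smooth except at finitely many spherical singularities, the borderline cylindrical (neckpinch) singularities being handled by the surgery arguments of \cite{BWIsotopy, JZhu}. A spherical singularity corresponds to the collapse of a topologically trivial component and can be undone by an ambient isotopy near extinction, so the flow deforms surfaces through isotopies preserving the asymptotic boundary $\Gamma$. Following \cite{BWTopUniq}, a connectedness argument for the space of surfaces asymptotic to $\Gamma$ of small renormalized area --- on which the flow, having only spherical singularities, acts by isotopies --- would then show that any two minimal surfaces in this class, in particular $\Sigma_1$ and $\Sigma_2$, are ambiently isotopic.

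The principal obstacle is the analysis at the ideal boundary $\partial_\infty\mathbb{H}^3$: one must set up a well-posed weak mean curvature flow subject to the prescribed $C^1$ asymptotic boundary condition, prove the avoidance principle and regularity uniformly up to $\partial_\infty\mathbb{H}^3$ (where the surfaces are noncompact and the ambient metric degenerates in the Poincar\'e model), and check that both the entropy monotonicity and the blow-up analysis persist near the ideal boundary. A secondary difficulty, exactly as in the Euclidean case, is the hyperbolic analog of the mean-convex neighborhood theorem required to convert the spherical, and threshold cylindrical, singularities into genuine isotopies rather than mere topological surgeries.
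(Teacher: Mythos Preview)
The statement you are attempting to prove is labeled in the paper as a \emph{conjecture} (Conjecture~\ref{MainConj}), not a theorem; the paper contains no proof of it. The author only motivates it by the analogy with \cite{BWTopUniq} and by the parallel between renormalized area and relative expander entropy, and explicitly leaves it open. So there is no ``paper's own proof'' against which to compare your proposal.

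That said, your sketch is a reasonable outline of exactly the program the paper is gesturing at, and you have correctly identified the two substantive obstacles that prevent it from being a proof. The step converting the hypothesis on $\Gamma$ into $\lambda_{\mathbb{H}}[\Sigma_i]\leq \lambda[\mathbb{S}^1\times\Real]$ via the equality case of Theorem~\ref{MobEntropyThm} is fine, as is the blow-up heuristic that tangent flows are Euclidean self-shrinkers with entropy controlled by $\lambda_{\mathbb{H}}$. But the remainder is genuinely incomplete: you would need a theory of weak mean curvature flow in $\mathbb{H}^3$ with prescribed asymptotic boundary on $\partial_\infty\mathbb{H}^3$, including avoidance, compactness, and regularity uniform up to the ideal boundary, none of which exists in the literature in the form you need. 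You would also need a hyperbolic analogue of the mean-convex neighborhood/canonical neighborhood machinery to promote the singularity classification into actual isotopies, and, as the paper hints, a correct substitute for the relative expander entropy (likely the renormalized area, whose special properties when $n=2$ are what single out this dimension). These are not small technicalities; they are the content of the conjecture, and your proposal does not resolve them.
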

A consequence of this conjecture and \cite{andersonCompleteMinimalHypersurfaces1983} is that if $\Gamma\subset \mathbb{S}^2$ satisfies $\lambda_{c}[\Gamma]\leq \lambda[\mathbb{S}^{1}\times \Real]$ and $\Sigma\subset \mathbb{H}^3$ is a minimal surface with $\partial_\infty \Sigma=\Gamma$, then $\Sigma$ is a topological disk and, in particular, $\Gamma$ is connected.  The reason we restrict the conjecture to $n=2$, is that the natural analog of relative expander entropy that is used in the analysis of \cite{BWTopUniq} is the renormalized area (see \cite{grahamConformalAnomalySubmanifold1999, Alexakis2010}) and this quantity has special properties when $n=2$.    Nevertheless, it would be interesting see whether an analogous result held in higher dimensions -- for instance it may only hold in odd ambient dimensions.
	
\section{Properties of the heat kernel of $\mathbb{H}^n$}
We discuss here properties of the heat kernel on hyperbolic space that are relevant to the proof of Theorem \ref{MainThm} and Theorem \ref{MobEntropyThm}.  We make extensive use of the investigation of the heat kernel on hyperbolic space carried out by Davies and Mandouvalos in \cite{daviesHeatKernelBounds1988}.

Recall, the symmetries of $\mathbb{H}^n$ ensure that if $H_n(t,p; t_0, p_0)$ is the heat kernel on $\mathbb{H}^n$ with singularitity at $p=p_0$ and $t=t_0$, then, there is a function $K_n$ so that
$$
H_n(t,p; t_0, p_0)=K_n(t-t_0, \dist_{\mathbb{H}^n}(p,p_0)).
$$
For instance, as $\mathbb{H}^1$ is just the Euclidean line,
$$
K_1(t,\rho)=(4\pi t)^{-1/2}e^{-\frac{\rho^2}{4t}}
$$
and this gives the heat kernel in one-dimension.
Using this and \eqref{MillisonEqn}, one obtains that
$$
K_3(t,\rho)=(4\pi t)^{-3/2}\frac{\rho}{\sinh(\rho)}e^{-t-\frac{\rho^2}{4t}}
$$
gives the heat kernel on $\mathbb{H}^3$ -- see \cite{daviesHeatKernelBounds1988} for more details.

We recall two recurrence relations for $K_n$ from \cite{daviesHeatKernelBounds1988}.
The first is attributed to Millison:
\begin{equation}\label{MillisonEqn}
K_{n+2}(t,\rho) =-\frac{e^{-nt}}{2\pi \sinh(\rho)} \partial_\rho K_n(t,\rho).
\end{equation}
The second is:
\begin{equation} \label{IntegralEqn}
K_{n}(t,\rho)=\int_{\rho}^\infty \frac{e^{\frac{1}{4}\left(2n-1\right)t}K_{n+1}(t,s) \sinh(s)}{\left(\cosh(s)-\cosh(\rho)\right)^{\frac{1}{2}}} ds.
\end{equation}

Using these relations, Davies and Mandouvalos obtained the following uniform estimates on the $K_{n+1}$ \cite[Thereom 3.1]{daviesHeatKernelBounds1988}:
\begin{equation}\label{KnDecayEst}
K_{n+1}(t,\rho)\leq C_n t^{-\frac{1}{2}(n+1)}e^{-\frac{1}{4}n^2 t-\frac{\rho^2}{4t}-\frac{1}{2}n\rho} (1+\rho+t)^{\frac{1}{2}n -1} (1+\rho).
\end{equation}
In particular, for any fixed $p_0\in \mathbb{H}^{n+1}$ and $R>0$, on has for $t\geq 1$
\begin{equation}\label{TimeDecayEqn}
\sup_{p\in B_{R}^{\mathbb{H}^{n+1}}(p_0)} H_{n+1}(p,t; 0, p_0)\leq C_{n,R}' t^{-\frac{3}{2}} e^{-\frac{1}{4}n^2 t}.
\end{equation}

\begin{prop}\label{KnProp}
	If $\partial^2 \log K_n -\coth(\rho) \partial_\rho \log K_n \geq 0$ with strict inequality for $\rho>0$, then, for any $m\leq n$,
    \begin{equation}\label{KconvexityEqn}
	\partial^2 \log K_m -\coth(\rho) \partial_\rho \log K_m \geq 0 \mbox{ and the inequality is strict for $\rho>0$}.
	\end{equation} 
	In particular, \eqref{KconvexityEqn} holds for $1\leq m \leq 3$.
\end{prop}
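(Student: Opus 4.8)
The plan is to recast \eqref{KconvexityEqn} as an ordinary one-variable convexity statement and then show that this convexity is transported downward in the index by the recurrence \eqref{IntegralEqn}; the base of the induction will be a direct verification for $n=3$. Write $K_m(t,\rho)=\hat K_m(t,\cosh\rho)$ — legitimate since $K_m(t,\cdot)$ is a smooth even function of $\rho$, so $u\mapsto\hat K_m(t,u)$ is smooth and positive on $[1,\infty)$. The chain rule yields
$$
\partial_\rho^2\log K_m-\coth(\rho)\,\partial_\rho\log K_m=\bigl(\partial_u^2\log\hat K_m\bigr)(t,\cosh\rho)\,\sinh^2(\rho),
$$
the terms proportional to $(\partial_u\log\hat K_m)(t,\cosh\rho)\cosh(\rho)$ cancelling. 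Hence \eqref{KconvexityEqn} for $K_m$, with strict inequality for $\rho>0$, is \emph{equivalent} to: for every $t>0$, the function $u\mapsto\hat K_m(t,u)$ is log-convex on $[1,\infty)$ and strictly log-convex on $(1,\infty)$.

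The heart of the argument is a single descent step: if $\hat K_{m+1}(t,\cdot)$ is log-convex on $[1,\infty)$ (resp.\ strictly so on $(1,\infty)$), then so is $\hat K_m(t,\cdot)$. In \eqref{IntegralEqn} substitute $u=\cosh s$ and then $u=\cosh\rho+r^2$; the relation becomes
$$
\hat K_m(t,v)=2\,e^{\frac14(2m-1)t}\int_0^\infty\hat K_{m+1}(t,v+r^2)\,dr\qquad(v\ge1),
$$
the integral converging by \eqref{KnDecayEst}. For each fixed $r\ge0$, $v\mapsto\hat K_{m+1}(t,v+r^2)$ is a translate of $\hat K_{m+1}(t,\cdot)$ evaluated on $[1+r^2,\infty)\subseteq[1,\infty)$, hence log-convex on $[1,\infty)$; and log-convexity is preserved by multiplication by the positive constant $2e^{\frac14(2m-1)t}$ and by integration in $r$ (the latter the standard fact, proved via Hölder's inequality exactly as in the classical proof that Euler's $\Gamma$ is log-convex, that an integral of log-convex functions is log-convex). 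This gives log-convexity of $\hat K_m(t,\cdot)$. For strictness, take $1<v_0<v_1$, $\lambda\in(0,1)$, and $v_\lambda=(1-\lambda)v_0+\lambda v_1$; for every $r\ge0$ the points $v_0+r^2$ and $v_1+r^2$ lie in $(1,\infty)$ and are distinct, so strict log-convexity of $\hat K_{m+1}(t,\cdot)$ there gives $\hat K_{m+1}(t,v_\lambda+r^2)<\hat K_{m+1}(t,v_0+r^2)^{1-\lambda}\hat K_{m+1}(t,v_1+r^2)^{\lambda}$ for all $r$, and integrating this strict inequality and then applying Hölder yields $\hat K_m(t,v_\lambda)<\hat K_m(t,v_0)^{1-\lambda}\hat K_m(t,v_1)^{\lambda}$.

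The first assertion now follows by downward induction on $m$: the hypothesis is exactly the case $m=n$, and the descent step passes from $m+1$ to $m$, down to $m=1$. For the final sentence it suffices to verify the hypothesis for $n=3$ and apply the first part with $n=3$. From $K_3(t,\rho)=(4\pi t)^{-3/2}\tfrac{\rho}{\sinh\rho}e^{-t-\rho^2/4t}$ one computes
$$
\partial_\rho^2\log K_3-\coth(\rho)\,\partial_\rho\log K_3=\Bigl(1-\tfrac1{\rho^2}-\tfrac{\coth\rho}{\rho}+\tfrac2{\sinh^2\rho}\Bigr)+\tfrac1{2t}\bigl(\rho\coth\rho-1\bigr).
$$
The second summand is nonnegative since $\rho\coth\rho\ge1$; and the first is positive on $(0,\infty)$ because, after clearing denominators and setting $u=2\rho$, its positivity is equivalent to that of $Q(u):=(u^2-4)\cosh u-2u\sinh u+3u^2+4$, which holds since $Q$ and its first three derivatives vanish at $u=0$ while $Q^{(4)}(u)=6u\sinh u+u^2\cosh u>0$ for $u>0$. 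Thus the hypothesis holds for $n=3$ with strict inequality for $\rho>0$, which gives \eqref{KconvexityEqn} for $1\le m\le 3$.

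I expect the descent step to be essentially formal once the substitution in \eqref{IntegralEqn} is spotted, so the principal obstacle is the explicit verification of the base case $n=3$ in the last paragraph; the one delicate point in the descent itself is propagating strictness correctly, which is what the Hölder argument above is for.
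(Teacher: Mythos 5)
Your change of variables $u=\cosh\rho$, the identity $\partial_\rho^2\log K_m-\coth(\rho)\,\partial_\rho\log K_m=\sinh^2(\rho)\,\partial_u^2\log\hat K_m$, the rewriting of \eqref{IntegralEqn} as $\hat K_m(t,v)=2e^{\frac14(2m-1)t}\int_0^\infty\hat K_{m+1}(t,v+r^2)\,dr$, and the base-case computation for $n=3$ (including the reduction to $Q(u)=(u^2-4)\cosh u-2u\sinh u+3u^2+4$ and the check $Q^{(4)}(u)=6u\sinh u+u^2\cosh u$) are all correct; this is essentially the paper's argument -- descend via \eqref{IntegralEqn}, apply Cauchy--Schwarz, verify $n=3$ explicitly -- written in a variable that makes the structure more transparent. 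There is, however, a genuine gap in how you propagate \emph{strictness}. The conclusion \eqref{KconvexityEqn} asserts the pointwise inequality $\partial_\rho^2\log K_m-\coth(\rho)\,\partial_\rho\log K_m>0$ at every $\rho>0$, i.e.\ $\partial_u^2\log\hat K_m(t,u)>0$ at every $u>1$. Your H\"{o}lder argument only yields strict log-convexity in the sense of values, $\hat K_m(t,v_\lambda)<\hat K_m(t,v_0)^{1-\lambda}\hat K_m(t,v_1)^{\lambda}$. For a smooth function this gives $\partial_u^2\log\hat K_m\geq 0$ and forbids the second derivative from vanishing on an interval, but it does not exclude isolated zeros (compare $u\mapsto(u-2)^4$, which is strictly convex on $(1,3)$ yet has vanishing second derivative at $u=2$). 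So the ``equivalence'' claimed in your first paragraph is really only an implication, and it points in the direction you do not prove. This is not a cosmetic issue: the pointwise strict inequality is exactly what the proof of Theorem \ref{MainThm} uses to conclude $Q(\Phi_n^{t_0,p_0})>0$ wherever $|\nabla^\perp_{\Sigma_t}\rho|\neq 0$, and hence to obtain the rigidity (minimal cone) statement.

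The repair is to run Cauchy--Schwarz on the differentiated identity rather than H\"{o}lder on the values: justify, using \eqref{KnDecayEst}, differentiation under the integral sign to get
$$
\partial_v^j\hat K_m(t,v)=2e^{\frac14(2m-1)t}\int_0^\infty\partial_v^j\hat K_{m+1}(t,v+r^2)\,dr,\qquad j=1,2,
$$
invoke the inductive pointwise bound $\partial_v^2\hat K_{m+1}\geq(\partial_v\hat K_{m+1})^2/\hat K_{m+1}$, strict on $(1,\infty)$, and estimate $\bigl(\int\partial_v\hat K_{m+1}\bigr)^2\leq\int\hat K_{m+1}\cdot\int(\partial_v\hat K_{m+1})^2/\hat K_{m+1}$. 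Since $v+r^2>1$ for all $r>0$ when $v\geq1$, the first inequality is strict on a set of full measure in $r$, and one obtains $\hat K_m\,\partial_v^2\hat K_m>(\partial_v\hat K_m)^2$ pointwise, which is the statement you need. This is precisely what the paper does in the $\rho$ variable; with that substitution your write-up becomes a clean equivalent of the published proof.
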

\begin{proof}
	It is enough to show the result for $m=n-1$ as the first claim will then follow by induction.

  Clearly,
	$$
	\partial^2 \log K_m -\coth(\rho) \partial_\rho \log K_m \geq 0
	$$
	is equivalent to 
	$$
	\partial^2  K_m -\coth(\rho) \partial_\rho K_m \geq \frac{(\partial_\rho K_m)^2}{K_m}.
	$$
	Moreover, one inequality is strict only if the other is.
	We prove the second inequality.
	
	To that end, observe that combining \eqref{MillisonEqn} and \eqref{IntegralEqn} yields,
	$$
	\partial_\rho K_{n}(t,\rho)=\int_{\rho}^\infty \frac{e^{\frac{1}{4} (2n-1)t} (\partial_s K_{n+1})(t, s) \sinh(\rho)}{\left(\cosh (s)-\cosh(\rho)\right)^{\frac{1}{2}}} ds
	$$
	and
	\begin{align*}
	\partial_\rho^2 K_{n}(t, \rho)&= \int_{\rho}^\infty \frac{e^{\frac{1}{4} (2n-1)t} (\partial_s K_{n+1}(t, s) )\cosh(\rho)}{\left(\cosh (s)-\cosh(\rho)\right)^{\frac{1}{2}}} ds\\
	&+\int_{\rho}^\infty \frac{e^{\frac{1}{4} (2n-1)t} (\partial_s^2 K_{n+1}(t, s)-\coth(s) \partial_s K_{n+1}(t,s) )\frac{\sinh^2(\rho)}{\sinh(s)}}{\left(\cosh (s)-\cosh(\rho)\right)^{\frac{1}{2}}} ds
	\end{align*}
	As such,
	\begin{align*}
	\partial_\rho^2 K_{n-1}&-\coth(\rho) \partial_\rho K_{n-1} =\\
	&\int_{\rho}^\infty \frac{e^{\frac{1}{4} (2n-3)t} (\partial_s^2 K_{n}(t, s)-\coth(s) \partial_s K_{n}(t,s) )\frac{\sinh^2(\rho)}{\sinh(s)}}{\left(\cosh (s)-\cosh(\rho)\right)^{\frac{1}{2}}} ds\\
	&\geq \int_{\rho}^\infty \frac{e^{\frac{1}{4} (2n-3)t} \frac{(\partial_s K_{n})^2}{K_n}\frac{\sinh^2(\rho)}{\sinh(s)}}{\left(\cosh (s)-\cosh(\rho)\right)^{\frac{1}{2}}} ds\\
	\end{align*}
	Moreover, this inequality is strict for $\rho>0$ as can be seen by using the induction hypotheses and because $\sinh(\rho)>0$.  
	Hence, the Cauchy-Schwarz inequality gives
	\begin{align*}
	K_{n-1}&\left(\partial_\rho^2 K_{n-1}-\coth(\rho) \partial_\rho K_{n-1} \right) \\
	&\geq \int_{\rho}^\infty \frac{e^{\frac{1}{4} (2n-3)t} K_n \sinh(s)}{\left(\cosh (s)-\cosh(\rho)\right)^{\frac{1}{2}}} ds \int_{\rho}^\infty \frac{e^{\frac{1}{4} (2n-3)t} \frac{(\partial_s K_{n})^2}{K_n}\frac{\sinh^2(\rho)}{\sinh(s)}}{\left(\cosh (s)-\cosh(\rho)\right)^{\frac{1}{2}}} ds\\
	&\geq\left( \int_{\rho}^\infty \frac{e^{\frac{1}{4} (2n-3)t} (\partial_s K_{n}){\sinh(\rho)}}{\left(\cosh (s)-\cosh(\rho)\right)^{\frac{1}{2}}} ds\right)^2\\
	&=\left(\partial_\rho K_{n-1}\right)^2\\
	\end{align*}
	and the inequality is strict when $\rho>0$.
	That is,
	$$
	\partial_\rho^2 K_{n-1}-\coth(\rho) \partial_\rho K_{n-1} \geq \frac{(\partial_\rho K_{n-1})^2}{K_{n-1}}
	$$
	with strict inequality for $\rho>0$
	and the first claim is proved.  For the second, observe
	$$
	\log K_3(t,\rho)=- \frac{\rho^2}{4t}+\log(\rho)-\log(\sinh(\rho))-t-\frac{3}{2} \log(4\pi t)
	$$
    Hence,
    $$
    \partial_\rho \log K_3(t,\rho) =-\frac{\rho}{2t}+\frac{1}{\rho}-\coth(\rho)
    $$
    and
    $$
    \partial^2_\rho \log K_3(t,\rho)=- \frac{1}{2t} -\frac{1}{\rho^2} +\frac{1}{\sinh^2(\rho)}.
    $$
    It follows that
    $$
       \partial^2_\rho \log K_3-\coth(\rho)\partial_\rho \log K_3=\frac{ \rho \coth(\rho)-1}{2t} +\frac{1}{\sinh^2(\rho)}+\coth^2(\rho)-\frac{1}{\rho^2}-\frac{\coth(\rho)}{\rho}.
    $$   
    One readily checks that
    $$
    \rho \coth(\rho)-1\geq 0
    $$
    and the inequality is strict for $\rho>0$.
    Indeed, setting $f(\rho)=\rho \cosh(\rho)-\sinh(\rho)$ one has $f(0)=0$ and
    and
    $$
f'(\rho)=\rho \sinh(\rho)\geq 0
$$
with strict inequality for $\rho>0$.
Thus,  $f(\rho)> 0$ for $\rho> 0$. The inequality follows.
 Likewise, using power series:
    $$
    \frac{1}{\sinh^2(\rho)}+\coth^2(\rho)-\frac{1}{\rho^2}-\frac{\coth(\rho)}{\rho}\geq 0
    $$
    Indeed, 
    this is equivalent to
    $$
    g(\rho)= 1+\cosh^2(\rho)-\frac{\sinh^2(\rho)}{\rho^2}-\frac{\cosh(\rho)\sinh(\rho)}{\rho} \geq 0.
    $$
    Clearly, 
   \begin{align*}
    g(\rho)&=1 +\frac{1}{4}\left( e^{2\rho}+e^{-2\rho} +2\right) -\frac{1}{4\rho^2}\left( e^{2\rho}+e^{-2\rho} -2\right)-\frac{1}{4\rho} \left( e^{2\rho}-e^{-2\rho}\right)\\
    \end{align*}
    expanding as a power series gives
    \begin{align*}
    g(\rho)&= \frac{1}{4} \sum_{n=1}^\infty \left( \frac{(2^{n} +(-2)^n)}{n!}- \frac{(2^{n+2} +(-2)^{n+2})}{(n+2)!} -\frac{(2^{n+1}-(-2)^{n+1})}{(n+1)!}\right)\rho^{n}\\
    &=\frac{1}{4} \sum_{l=1}^\infty \left( 2\frac{4^{l}}{(2l)!} -8 \frac{4^{l}}{(2l+2)!}- 4\frac{4^{l}}{(2l+1)!}\right) \rho^{2l}\\
    &= \sum_{l=1}^\infty \left( 2 (2l+1)(2l+2) - 4(2l+2)-8\right) \frac{4^{l-1}}{(2l+2)!}\rho^{2l}\\     
    &=\sum_{l=1}^\infty \left(8l^2 + 4l-12\right) \frac{4^{l-1}}{(2l+2)!}\rho^{2l}.   
    \end{align*}
    One verifies that $8l^2+4l-12\geq 0$ for $l\geq 1$ and so $g(\rho) \geq 0$ as claimed.
    Hence,
    $$
     \partial^2_\rho \log K_3-\coth(\rho)\partial_\rho \log K_3\geq 0.
     $$
     with strict inequality when $\rho>0$.
     This completes the proof of the second claim.
\end{proof}


The non-strict convexity of Proposition \ref{KnProp} is equivalent to a certain convexity property for the heat kernel of $\mathbb{H}^n$.
\begin{prop} \label{HeatKernelConvexityEstProp}
	There is an $N\in [4,\infty]$ so if $n<N$, then the heat kernel $H_n(t,p; t_0, p_0)$ satisfies,  for every $t>t_0$,
	$$
	\nabla_{\mathbb{H}^n}^2 \log H_n - \coth(\rho) \partial_\rho \log H_n g_{\mathbb{H}^n} \geq 0.
	$$
	Here $\rho=\dist_{\mathbb{H}^n}(p,p_0)$.
\end{prop}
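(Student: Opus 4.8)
The plan is to reduce the tensor inequality to the scalar differential inequality of Proposition \ref{KnProp} by exploiting the fact that, for fixed $t>t_0$, the function $H_n(t,\cdot\,;t_0,p_0)=K_n(t-t_0,\rho)$ is a radial function of $\rho=\dist_{\mathbb{H}^n}(\cdot,p_0)$, and the Hessian of a radial function on a space form is completely explicit.

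First I would record the Hessian of a radial function on $\mathbb{H}^n$. Away from $p_0$ the distance $\rho$ is smooth, and since the geodesic sphere of radius $\rho$ in $\mathbb{H}^n$ has second fundamental form $\coth(\rho)$ times its induced metric, one has $\nabla^2_{\mathbb{H}^n}\rho=\coth(\rho)\,(g_{\mathbb{H}^n}-d\rho\otimes d\rho)$. Writing $u=\log H_n$ and applying the chain rule, for $\rho>0$,
$$
\nabla^2_{\mathbb{H}^n} u=(\partial_\rho^2\log K_n)\,d\rho\otimes d\rho+(\partial_\rho\log K_n)\,\coth(\rho)\,(g_{\mathbb{H}^n}-d\rho\otimes d\rho).
$$
Subtracting $\coth(\rho)(\partial_\rho\log K_n)\,g_{\mathbb{H}^n}$, the terms proportional to $g_{\mathbb{H}^n}$ cancel and one is left with
$$
\nabla^2_{\mathbb{H}^n}\log H_n-\coth(\rho)\,\partial_\rho\log H_n\,g_{\mathbb{H}^n}=\bigl(\partial_\rho^2\log K_n-\coth(\rho)\,\partial_\rho\log K_n\bigr)\,d\rho\otimes d\rho.
$$
Since $d\rho\otimes d\rho$ is a nonnegative symmetric $2$-tensor, the left-hand side is $\geq 0$ exactly when the scalar quantity $\partial_\rho^2\log K_n-\coth(\rho)\,\partial_\rho\log K_n$ is $\geq 0$ on $(0,\infty)$ — which is precisely the content of Proposition \ref{KnProp} with $m=n$.

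Next I would define $N$. By Proposition \ref{KnProp}, if the scalar inequality holds for some $n$ then it holds for all $m\leq n$; hence the set $\{\,n:\partial_\rho^2\log K_n-\coth(\rho)\,\partial_\rho\log K_n\geq 0\text{ for all }\rho>0\,\}$ has the form $\{1,2,\dots,n_0\}$ (or is all of $\mathbb{N}$), and it contains $\{1,2,3\}$. Set $N=n_0+1$ (and $N=\infty$ if the set is unbounded); then $N\geq 4$, and this $N$ agrees with the one in Theorem \ref{MainThm}. For every $n<N$ the scalar inequality, hence the asserted tensor inequality, holds for $\rho>0$. Finally, for $t>t_0$ the heat kernel $H_n(t,\cdot\,;t_0,p_0)$ is smooth and strictly positive on all of $\mathbb{H}^n$, including at $p_0$; since $\partial_\rho\log K_n=O(\rho)$ as $\rho\downarrow 0$ (a smooth radial function has vanishing radial derivative at the origin), the quantity $\coth(\rho)\,\partial_\rho\log K_n$ stays bounded there, so $\nabla^2_{\mathbb{H}^n}\log H_n-\coth(\rho)\,\partial_\rho\log H_n\,g_{\mathbb{H}^n}$ extends continuously across $p_0$ and its nonnegativity persists by continuity.

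I do not expect a genuine obstacle: all of the analytic input is already in Proposition \ref{KnProp}, and what remains is the standard radial-Hessian identity together with the cancellation of its spherical part, plus the bookkeeping needed to define $N$ and to handle $\rho=0$. The one place to be careful is the sign and normalization in $\nabla^2_{\mathbb{H}^n}\rho=\coth(\rho)(g_{\mathbb{H}^n}-d\rho\otimes d\rho)$ (equivalently, the principal curvatures of geodesic spheres in $\mathbb{H}^n$), since it is exactly this that makes the $\coth(\rho)\,\partial_\rho$-correction the right one to kill the tangential part of the Hessian.
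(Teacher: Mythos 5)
Your proposal is correct and follows essentially the same route as the paper: reduce to the scalar inequality $\partial_\rho^2\log K_n-\coth(\rho)\partial_\rho\log K_n\geq 0$ via the radial-Hessian identity on $\mathbb{H}^n$, then invoke Proposition \ref{KnProp} to define $N\geq 4$. Your extra care at $\rho=0$ is a harmless refinement the paper leaves implicit.
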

\begin{proof}

%
%
	
    The metric $g_{\mathbb{H}^n}$ has the form
	$$
	g_{\mathbb{H}^n}=d\rho \otimes d\rho +\sinh^2(\rho) g_{\mathbb{S}^{n-1}}.
	$$
	Hence, for any radial function $f=f(\rho(p))=f(\dist_{\mathbb{H}^n}(p,p_0))$ on $\mathbb{H}^n$
	$$
	\nabla^2_{\mathbb{H}^n} f=\partial^2_\rho f d\rho \otimes d\rho + \coth(\rho) \partial_\rho f  (g_{\mathbb{H}^n} -d\rho\otimes d\rho).
	$$
It follows that
	$$
	\nabla^2\log H_n-\coth(\rho) \partial_\rho H_n g_{\mathbb{H}^n}=(\partial^2_\rho \log K_n -\coth(\rho) \partial_\rho K_n) d\rho\otimes d\rho
	$$
	and so the convexity property is equivalent to
	$$
	\partial^2_\rho \log K_n -\coth(\rho) \partial_\rho K_n \geq 0.
	$$
	By Proposition \ref{KnProp}, once this condition holds for $N-1$ it holds for all $n<N$.  Moreover, it holds for $n=3$ and so we may take $N\geq 4$.
\end{proof}

\section{Proof of Theorem \ref{MainThm}}
In this section we prove the main montonicity result.
First of all, consider hyperbolic space $\mathbb{H}^{n+k}$ together with a distinguished point $p_0\in \mathbb{H}^{n+k}$.  Set $\rho=\dist_{\mathbb{H}^{n+k}}(p,p_0)$.   We use the following form of the metric in what follows
$$
g_{\mathbb{H}^{n+k}}=d\rho\otimes d\rho +\sinh^2(\rho) dh_{\mathbb{S}^{n+k-1}}.
$$
Using this form of the hyperbolic metric, it is straightforward to compute that, for any $C^2$ function $f=f(\rho(p))$ on $\mathbb{H}^{n+k}$ that is radial with respect to $p_0$,
\begin{equation}
\label{HessEqn}
\nabla^2_{\mathbb{H}^{n+k}} f=\partial^2_\rho f d\rho \otimes d\rho + \coth(\rho) \partial_\rho f  (g_{\mathbb{H}^{n+k}} -d\rho\otimes d\rho).
\end{equation}
Let $K_n(t,\rho)$ be the function from the previous section.  Set 
$$
\Phi_n^{t_0, p_0}(t,p)=K_n(t_0-t, \rho(p)).
$$
When $k=0$,
$$
\Phi_n^{t_0, p_0}(t,p)=K_n(t_0-t,p)
$$
so,  in this case,  $\Phi_n^{t_0, p_0}$ is the backwards heat kernel on $\mathbb{H}^n$.
For general $k$, taking the trace of \eqref{HessEqn} implies,
$$
\left(\frac{\partial}{\partial t} +\Delta_{\mathbb{H}^{n+k}}\right) \Phi_n^{t_0, p_0}=k \coth(\rho) \partial_\rho \Phi_n^{t_0, p_0}.
$$
We have the following analog of Huisken's formula:
\begin{prop}\label{MonotonicityFormulaProp}
Suppose $\set{\Sigma_t}_{t\in [0,T)}$ is a mean curvature flow in $\mathbb{H}^{n+k}$ of $n$-dimensional submanifolds with exponential volume growth. For any $t_0\in (0,T]$, $p_0\in \mathbb{H}^{n+k}$ and $t\in (0,t_0)$ one has
\begin{align*}
\frac{d}{dt} \int_{\Sigma_t} \Phi_n^{t_0,p_0} dVol_{\Sigma_t} &=- \int_{\Sigma_t}\left( \left| \frac{\nabla^\perp_{\Sigma_t} \Phi_n^{t_0,p_0}}{\Phi_n^{t_0,p_0}} -\mathbf{H}_{\Sigma_t}\right|^2 +Q(\Phi_n^{t_0,p_0}) \right) \Phi_n^{t_0,p_0} dVol_{\Sigma_t}.
\end{align*}
Here,
$$
Q(\Phi_n^{t_0,p_0})=\sum_{i=1}^{k}\nabla^2_{\mathbb{H}^{n+k}}\log \Phi_n^{t_0, p_0} (\nu_i, \nu_i) - k \coth(\rho) \partial_\rho \log \Phi_n^{t_0, p_0}
$$
and $\set{\nu_1(p), \ldots, \nu_{k}(p)}$ is any choice of orthonormal basis of $N_p\Sigma_t$, the normal space of $\Sigma_t$ at $p$.  
\end{prop}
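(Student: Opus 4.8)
The plan is to reproduce in this ambient geometry the weighted first--variation computation underlying Huisken's monotonicity formula \cite{HuiskenMon} and its use by Colding and Minicozzi \cite{Coldinga}. The two inputs specific to hyperbolic space are the drift heat equation $\left(\frac{\partial}{\partial t}+\Delta_{\mathbb{H}^{n+k}}\right)\Phi_n^{t_0,p_0}=k\coth(\rho)\,\partial_\rho\Phi_n^{t_0,p_0}$ recorded just before the statement, and the Davies--Mandouvalos decay estimate \eqref{KnDecayEst}, which is what makes the noncompact integration by parts legitimate.

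Write $\Phi=\Phi_n^{t_0,p_0}$ and fix $t\in(0,t_0)$. First I would record the two standard evolution identities along the flow $\frac{\partial x}{\partial t}=\mathbf{H}_{\Sigma_t}$: the volume form satisfies $\frac{\partial}{\partial t}\,dVol_{\Sigma_t}=-|\mathbf{H}_{\Sigma_t}|^2\,dVol_{\Sigma_t}$, while the chain rule together with $\mathbf{H}_{\Sigma_t}\perp\Sigma_t$ gives $\frac{d}{dt}(\Phi\circ x)=\partial_t\Phi+\langle\nabla^{\mathbb{H}^{n+k}}\Phi,\mathbf{H}_{\Sigma_t}\rangle=\partial_t\Phi+\langle\nabla^\perp_{\Sigma_t}\Phi,\mathbf{H}_{\Sigma_t}\rangle$. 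Combining these (deferring the justification of differentiating under the integral to the last step) yields
\[
\frac{d}{dt}\int_{\Sigma_t}\Phi\,dVol_{\Sigma_t}=\int_{\Sigma_t}\Bigl(\partial_t\Phi+\langle\nabla^\perp_{\Sigma_t}\Phi,\mathbf{H}_{\Sigma_t}\rangle-|\mathbf{H}_{\Sigma_t}|^2\Phi\Bigr)\,dVol_{\Sigma_t}.
\]

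Next I would eliminate $\partial_t\Phi$ using the drift equation, $\partial_t\Phi=-\Delta_{\mathbb{H}^{n+k}}\Phi+k\coth(\rho)\,\partial_\rho\Phi$, and split the ambient Laplacian relative to $\Sigma_t$: from $\nabla^2_{\Sigma_t}\Phi(X,Y)=\nabla^2_{\mathbb{H}^{n+k}}\Phi(X,Y)+\langle\nabla^{\mathbb{H}^{n+k}}\Phi,\mathrm{II}(X,Y)\rangle$ for tangent $X,Y$, taking the trace gives $\Delta_{\mathbb{H}^{n+k}}\Phi=\Delta_{\Sigma_t}\Phi-\langle\nabla^\perp_{\Sigma_t}\Phi,\mathbf{H}_{\Sigma_t}\rangle+\sum_{i=1}^{k}\nabla^2_{\mathbb{H}^{n+k}}\Phi(\nu_i,\nu_i)$. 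Substituting turns the integrand into $k\coth(\rho)\,\partial_\rho\Phi-\Delta_{\Sigma_t}\Phi+2\langle\nabla^\perp_{\Sigma_t}\Phi,\mathbf{H}_{\Sigma_t}\rangle-\sum_{i=1}^{k}\nabla^2_{\mathbb{H}^{n+k}}\Phi(\nu_i,\nu_i)-|\mathbf{H}_{\Sigma_t}|^2\Phi$. The rest is purely algebraic: using $\nabla^2_{\mathbb{H}^{n+k}}\Phi=\Phi\,\nabla^2_{\mathbb{H}^{n+k}}\log\Phi+\Phi^{-1}\,d\Phi\otimes d\Phi$ to rewrite the normal Hessian of $\Phi$ through that of $\log\Phi$, and completing the square in $\bigl|\frac{\nabla^\perp_{\Sigma_t}\Phi}{\Phi}-\mathbf{H}_{\Sigma_t}\bigr|^2$, one verifies that this integrand equals $-\bigl(\bigl|\frac{\nabla^\perp_{\Sigma_t}\Phi}{\Phi}-\mathbf{H}_{\Sigma_t}\bigr|^2+Q(\Phi)\bigr)\Phi-\Delta_{\Sigma_t}\Phi$. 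Hence the asserted formula reduces to the identity $\int_{\Sigma_t}\Delta_{\Sigma_t}\Phi\,dVol_{\Sigma_t}=0$.

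Establishing this last identity on a complete, merely exponentially growing $\Sigma_t$ is, I expect, the only nonroutine step: in Euclidean space the Gaussian weight trivially beats polynomial volume growth, whereas here one must check that the heat-kernel decay defeats exponential volume growth. I would use cutoffs $\chi_R$ equal to $1$ on $\Sigma_t\cap B_R^{\mathbb{H}^{n+k}}(p_0)$, supported in $\Sigma_t\cap B_{2R}^{\mathbb{H}^{n+k}}(p_0)$, with $|\nabla^{\Sigma_t}\chi_R|\le 2/R$, so that $\int_{\Sigma_t}\chi_R\Delta_{\Sigma_t}\Phi=-\int_{\Sigma_t}\langle\nabla^{\Sigma_t}\chi_R,\nabla^{\Sigma_t}\Phi\rangle$; since $\Phi=K_n(t_0-t,\rho)$ and hence $|\nabla^{\mathbb{H}^{n+k}}\Phi|=|\partial_\rho K_n(t_0-t,\rho)|$, the estimate \eqref{KnDecayEst} (with indices shifted to bound $K_n$, and combined with Millison's relation \eqref{MillisonEqn} to control $\partial_\rho K_n$ via $K_{n+2}$) gives a pointwise bound of the form $e^{-\rho^2/(4(t_0-t))}$ times a factor polynomial-times-exponential in $\rho$; integrating this over the annular region against $Vol_{\mathbb{H}^{n+k}}(\Sigma_t\cap B_{2R}^{\mathbb{H}^{n+k}}(p_0))\le M e^{2MR}$ and using $|\nabla^{\Sigma_t}\chi_R|\le 2/R$ shows the error tends to $0$ as $R\to\infty$. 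The same Gaussian-against-exponential bound shows $\int_{\Sigma_t}\Phi\,dVol_{\Sigma_t}<\infty$ and, on compact subintervals of $(0,t_0)$, justifies differentiating under the integral sign. Finally, $\dist_{\mathbb{H}^{n+k}}(\cdot,p_0)$ is smooth on $\mathbb{H}^{n+k}\setminus\{p_0\}$ (hyperbolic space has no cut locus) and $K_n(s,\cdot)$ extends smoothly and boundedly across $\rho=0$ for $s>0$, so the case $p_0\in\Sigma_t$ causes no difficulty.
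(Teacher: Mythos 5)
Your proposal is correct and follows essentially the same route as the paper: first variation, elimination of $\partial_t\Phi_n^{t_0,p_0}$ via the drift heat equation, the tangential/normal splitting of the ambient Laplacian, the $\log$-Hessian identity to produce $Q$, and reduction to $\int_{\Sigma_t}\Delta_{\Sigma_t}\Phi_n^{t_0,p_0}\,dVol_{\Sigma_t}=0$ justified by \eqref{KnDecayEst}, \eqref{MillisonEqn} and the exponential volume growth. Your cutoff argument for that last step simply spells out what the paper asserts in one line.
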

\begin{proof}
	By the first variation formula we have
	\begin{align*}
	\frac{d}{dt} \int_{\Sigma_t}& \Phi_n^{t_0, p_0} dVol_{\Sigma_t}=\int_{\Sigma_t} \frac{\partial}{\partial_t} \Phi_n^{t_0, p_0}+\nabla_{\mathbb{H}^{n+k}} \Phi_n^{t_0, p_0} \cdot \mathbf{H}_{\Sigma_t} -|\mathbf{H}_{\Sigma_t}|^2 \Phi_n^{t_0, p_0}  dVol_{\Sigma_t}
	\end{align*}
	As  $\Sigma_t$ has exponential volume growth,  \eqref{KnDecayEst} and \eqref{MillisonEqn} can be used together with an integration by parts to see
	$$
	\int_{\Sigma_t} \Delta_{{\Sigma_t}} \Phi_n^{t_0, p_0} dVol_{\Sigma_t}=0.
	$$
	Standard geometric computations imply that on $\Sigma_t$,
	$$
	\Delta_{\mathbb{H}^{n+k}} \Phi_n^{t_0, p_0} =  \Delta_{{\Sigma_t}}\Phi_n^{t_0, p_0} -\mathbf{H}_{\Sigma_t}\cdot \nabla_{\mathbb{H}^{n+k}}  \Phi_n^{t_0, p_0}+\sum_{i=1}^{k}\nabla^2_{\mathbb{H}^{n+k} }\Phi_n^{t_0, p_0} (\nu_i, \nu_i)
	$$
	where $\set{\nu_1(p), \ldots, \nu_{k}(p)}$ is an orthonormal basis of $N_p\Sigma_t$.  
	Hence,
	\begin{align*}
	\left(\frac{\partial}{\partial t} +\Delta_{\Sigma_t}\right)\Phi_n^{t_0, p_0} & =	\left(\frac{\partial}{\partial t} +\Delta_{\mathbb{H}^{n+k}}\right)\Phi_n^{t_0, p_0}+\mathbf{H}_{\Sigma_t}\cdot \nabla_{\mathbb{H}^{n+k}}  \Phi_n^{t_0, p_0}\\
	& -\sum_{i=1}^{k}\nabla^2_{\mathbb{H}^{n+k} }\Phi_n^{t_0, p_0} (\nu_i, \nu_i)\\
	\end{align*}
	It follows that
\begin{align*}
	\left(\frac{\partial}{\partial t} +\Delta_{\Sigma_t}\right)\Phi_n^{t_0, p_0} 	&= 	k \coth(\rho) \partial_\rho \Phi_{n}^{t_0,p_0}+\mathbf{H}_{\Sigma_t}\cdot \nabla_{\mathbb{H}^{n+k}}  \Phi_n^{t_0, p_0}\\
	& -\sum_{i=1}^{k}\nabla^2_{\mathbb{H}^{n+k} }\Phi_n^{t_0, p_0} (\nu_i, \nu_i)\\
	&=\mathbf{H}_{\Sigma_t}\cdot \nabla_{\mathbb{H}^{n+k}}  \Phi_n^{t_0, p_0} -Q(\Phi_n^{t_0,p_0}) \Phi_n^{t_0, p_0}	-\frac{|\nabla_{\Sigma_t}^\perp \Phi_n^{t_0,p_0}|^2}{\Phi_n^{t_0, p_0}}
	\end{align*}
	Where we used that
	$$
	 \nabla_{\Sigma_t}^2 \log \Phi_n^{t_0,p_0}=\frac{\nabla_{\Sigma_t}^2\Phi_n^{t_0,p_0}}{\Phi_n^{t_0,p_0}} -\frac{d\Phi_n^{t_0,p_0} \otimes d\Phi_n^{t_0,p_0}}{(\Phi_n^{t_0,p_0})^2}.
	$$
	Hence,
	\begin{align*}
	 \frac{d}{dt} \int_{\Sigma_t}& \Phi_n^{t_0, p_0} dVol_{\Sigma_t}=\int_{\Sigma_t}  \left(\frac{\partial}{\partial t} +\Delta_{\Sigma_t} \right)\Phi_n^{t_0, p_0}+\nabla_{\mathbb{H}^{n+k}} \Phi_n^{t_0, p_0} \cdot \mathbf{H}_{\Sigma_t} \\
	 &-|\mathbf{H}_{\Sigma_t}|^2\Phi_n^{t_0, p_0}  dVol_{\Sigma_t}\\
	 &=- \int_{\Sigma_t}\left( \left|\frac{\nabla_{\Sigma_t}^\perp \Phi_n^{t_0,p_0}}{\Phi_n^{t_0,p_0}} -\mathbf{H}_{\Sigma_t} \right|^2 +Q(\Phi_n^{t_0,p_0}) \right) \Phi_n^{t_0,p_0}dVol_{\Sigma_t}.
	\end{align*}
	This completes the proof.
\end{proof}

It is helpful to have the following elementary geometric lemma.
\begin{lem}\label{ConeLem}
Let $\Sigma \subset \mathbb{H}^{n+k}$ be a complete $n$-dimensional submanifold and let $p_0\in \Sigma$ and $\rho=\dist_{\mathbb{H}^{n+k}}(p,p_0)$. If $|\nabla_\Sigma^\perp \rho|=0$ for $p\in \Sigma\setminus \set{p_0}$, then $\Sigma$ is a cone over $p_0$.
\end{lem}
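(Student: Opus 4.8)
The plan is to show that the hypothesis forces the radial geodesics emanating from $p_0$ to be integral curves of a unit-length tangent vector field on $\Sigma$, and then to use the completeness of $\Sigma$ to conclude that, starting at any $p\in\Sigma$, the entire minimizing geodesic back to $p_0$ stays inside $\Sigma$.

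First I would record the basic geometry. Since $\mathbb{H}^{n+k}$ has no cut locus, $\rho=\dist_{\mathbb{H}^{n+k}}(\cdot,p_0)$ is smooth on $\mathbb{H}^{n+k}\setminus\set{p_0}$, $|\nabla_{\mathbb{H}^{n+k}}\rho|\equiv 1$, and the integral curves of $\nabla_{\mathbb{H}^{n+k}}\rho$, parametrized by arc length, are exactly the unit-speed geodesics issuing from $p_0$. Decomposing $\nabla_{\mathbb{H}^{n+k}}\rho=\nabla_\Sigma\rho+\nabla_\Sigma^\perp\rho$ along $\Sigma$, the hypothesis $|\nabla_\Sigma^\perp\rho|=0$ says precisely that $\nabla_{\mathbb{H}^{n+k}}\rho$ is tangent to $\Sigma$ at every point of $\Sigma\setminus\set{p_0}$; thus $X:=\nabla_\Sigma\rho$ is a smooth unit tangent vector field on the open manifold $\Sigma\setminus\set{p_0}$ which agrees with $\nabla_{\mathbb{H}^{n+k}}\rho$ there.

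The main step is then a flow argument. Fix $p\in\Sigma\setminus\set{p_0}$ and let $\gamma\colon[0,s^*)\to\Sigma\setminus\set{p_0}$ be the maximal integral curve of $-X$ with $\gamma(0)=p$; since $|\gamma'|\equiv 1$ it has length $s$ on $[0,s]$, so $\gamma([0,s])$ lies in the closed metric ball $\bar{B}^\Sigma_s(p)$, which is compact because $\Sigma$ is complete (Hopf--Rinow). Also $\frac{d}{ds}\rho(\gamma(s))=-\langle X,X\rangle=-1$, so $\rho(\gamma(s))=\rho(p)-s$; in particular $s^*\le\rho(p)$. If $s^*<\rho(p)$, then $\gamma$ has finite length, is Cauchy, and converges to a point $q\in\Sigma$ with $\rho(q)=\rho(p)-s^*>0$, hence $q\ne p_0$; but then $X$ is defined and smooth near $q$ and the integral curve extends past $s^*$, contradicting maximality. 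Therefore $s^*=\rho(p)$, and letting $s\uparrow\rho(p)$ the curve is again Cauchy and converges to a point $q_0\in\Sigma$ with $\rho(q_0)=0$, i.e.\ $q_0=p_0$ (which lies in $\Sigma$ by hypothesis).

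Finally, on $(0,\rho(p))$ the curve $\gamma$ is an integral curve of $-\nabla_{\mathbb{H}^{n+k}}\rho$ and hence a unit-speed geodesic of $\mathbb{H}^{n+k}$; by continuity it extends to a geodesic $[0,\rho(p)]\to\mathbb{H}^{n+k}$ from $p$ to $p_0$ of length exactly $\rho(p)=\dist_{\mathbb{H}^{n+k}}(p,p_0)$, so it is the unique minimizing geodesic joining the two points. Its image lies in $\Sigma$ by the previous paragraph, and since $p\in\Sigma\setminus\set{p_0}$ was arbitrary (the case $p=p_0$ being vacuous), $\Sigma$ is a cone over $p_0$. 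The only point requiring care is the endpoint behavior of $\gamma$: near $p_0$ the field $\nabla_\Sigma\rho$ degenerates, so one must lean on completeness of $\Sigma$ together with the arc-length parametrization to ensure the integral curve neither escapes $\Sigma$ in finite length nor stops before $\rho$ reaches $0$; the rest is routine.
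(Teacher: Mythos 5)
Your proof is correct, but it follows a genuinely different route from the paper's. The paper works at the level of second derivatives: it uses the identity $\nabla^2_{\mathbb{H}^{n+k}}\cosh(\rho)=\cosh(\rho)\,g_{\mathbb{H}^{n+k}}$ together with the vanishing of $\nabla_\Sigma^\perp\rho$ to get the intrinsic equation $\nabla_\Sigma^2\cosh(\rho)=\cosh(\rho)\,g_\Sigma$, separately proves $\Sigma$ is connected (by a minimum argument on a putative component missing $p_0$), then runs the ODE $\frac{d^2}{ds^2}\cosh(\rho\circ\gamma)=\cosh(\rho\circ\gamma)$ along an \emph{intrinsic} minimizing geodesic from $p_0$ to $p$ to conclude $\rho(\gamma(s))=s$, which forces $\gamma$ to be an ambient minimizing geodesic. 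You instead work at first order: you observe that the hypothesis makes the ambient radial field $\nabla_{\mathbb{H}^{n+k}}\rho$ tangent to $\Sigma$, flow backward along it from an arbitrary $p$, and use the escape lemma plus Hopf--Rinow to show the flow line (which is automatically the ambient radial geodesic, by the Gauss lemma) persists inside $\Sigma$ until it reaches $p_0$; connectivity then comes for free rather than as a separate step. Your argument is more elementary in that it avoids the Hessian computation entirely and does not depend on the specific form of $\nabla^2\cosh\rho$ in hyperbolic space, so it would transfer verbatim to any ambient manifold in which $\rho$ is smooth away from $p_0$ (e.g.\ any Cartan--Hadamard manifold); the paper's argument, by contrast, packages the same geometry into a comparison identity that sits naturally alongside the Hessian formula \eqref{HessEqn} already used throughout the paper. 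Both proofs are complete; the one point you rightly flag --- the degeneration of the vector field at $p_0$ --- is handled correctly by your Cauchy/completeness argument at the endpoint.
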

\begin{proof}
By \eqref{HessEqn},
	$$
	\nabla_{\mathbb{H}^{n+k}}^2 \cosh(\rho)=\cosh(\rho) g_{\mathbb{H}^{n+k}}
	$$
 and so the condition that $\nabla_\Sigma^\perp \rho=0$ ensures $\nabla_\Sigma^\perp \cosh (\rho)=0$ and so
	$$
	\nabla_{\Sigma}^2 \cosh(\rho)= \cosh(\rho) g_{\Sigma}.
	$$
	Moreover, $\Sigma$ is connected.  Indeed, if $\Sigma$ were not connected, then there would be a component $\Sigma'$ that does not contain $p_0$.  As $\Sigma$ is complete, this implies the existence of a point $p'\in \Sigma'$ at which $\rho$ achieves its (non-zero) minimum.  As this is a regular point of $\rho$, one must have $\nabla_{\Sigma}\rho=0$ at this point and so $|\nabla^\perp_\Sigma \rho|(p')=1$, a contradiction.

Hence, for each $p\in \Sigma\setminus p_0$, there is a minimizing geodesic parameterized by arclength $\gamma:[0,L]\to \Sigma$ connecting $p_0$ to $p$. That is, $\gamma$ satisfies $\gamma(0)=p_0$ and $\gamma'(s)$ of unit length. 
	One computes, 
	$$
	\frac{d^2}{ds^2} \cosh(\rho(\gamma(s)))=\nabla^2_{\Sigma} \cosh(\rho) (\gamma'(s), \gamma'(s))=\cosh\rho(\gamma(s)).
	$$
	As $\cosh(\rho(\gamma(s)))=1$ and $\frac{d}{ds}|_{s=0} \cosh(\rho(\gamma(s)))=0$, one has, by standard ODE analysis, that
	$$
	\cosh(\rho(\gamma(s)))=\cosh(s)
	$$
	hence, $\rho(\gamma(s))=s$ and so $\nabla_{\Sigma} \rho(\gamma(s))=\gamma'(s)$.  This implies $L=\rho(p)=\dist_{\mathbb{H}^{n+k}}(p,p_0)$ so $\gamma$ is a minimizing geodesic in $\mathbb{H}^{n+k}$.  As $p$ was arbitrary this implies $\Sigma$ is a cone.
\end{proof}

 We are now ready to prove Theorem \ref{MainThm}.
 \begin{proof}[Proof of Theorem \ref{MainThm}]
   As $\Phi_n^{t_0,p_0}$ is radial with respect to $p_0$, \eqref{HessEqn} implies that at $p\in \Sigma_t$,
\begin{align*}
 Q(\Phi_{n}^{t_0,p_0}) &=  \sum_{i=1}^k \left(  \partial^2_\rho \log K_n (d\rho(\nu_i))^2  +\coth(\rho) \partial_\rho \log K_n \left( 1-(d\rho(\nu_i))^2\right) \right)\\
 &-k\coth(\rho) \partial_\rho \log K_n\\
 &=
 \left(\partial^{2}_\rho \log K_n-\coth(\rho) \partial_\rho \log K_n\right) |\nabla_{\Sigma_t}^\perp \rho|^2
\end{align*}
where here $\set{\nu_1(p), \ldots, \nu_k(p)}$ is an orthonormal basis of $N_p \Sigma_t$.
  Thus, by Proposition \ref{KnProp} there is an $N\in [4,\infty]$ so for $n<N$, 
  $$
 Q(\Phi_{n}^{t_0,p_0})(t,p)\geq 0
  $$
  and the inequality is strict unless $p=p_0$ or $|\nabla_{\Sigma_t}^\perp \rho|(p)=0$.  
  Hence,
  $$
  \int_{\Sigma_t} Q(\Phi_{n}^{t_0,p_0}) \Phi_n^{t_0,p_0}\geq 0
  $$
  with strict inequality unless $\nabla_{\Sigma_t}^\perp \rho$ is identically zero.  By Lemma \ref{ConeLem}, this occurs only if $\Sigma_t$ is a cone over $p_0$.
  
  Hence, by Proposition \ref{MonotonicityFormulaProp},
  $$ 
  \frac{d}{dt} \int_{\Sigma_t} \Phi_n^{t_0,p_0}(t,p) dVol_{\Sigma_t}\leq 0
  $$
  and the inequality is strict unless $\Sigma_t$ is a cone over $p_0$.  If $\Sigma_t$ is a cone over $p_0$, then $\nabla^\perp_{\Sigma_t} \Phi_n^{t_0,p_0}=0$ and so, by Proposition \ref{MonotonicityFormulaProp},
  $$ 
  \frac{d}{dt} \int_{\Sigma_t} \Phi_n^{t_0,p_0}(t,p) dVol_{\Sigma_t}=-\int_{\Sigma_t} |\mathbf{H}_{\Sigma_t}|^2 \Phi_n^{t_0,p_0} dVol_{\Sigma_t}.
  $$
  In particular, the inequality is strict unless $\Sigma_t$ is a minimal submanifold that is a cone over $p_0$.  This completes the proof.
 \end{proof}
 
 We remark that while we carried out these computations for smooth mean curvature flows, they carry over to weak flows in the sense of Brakke as in \cite{IlmanenMon}.

\section{Hyperbolic Entropy of complete minimal submanifolds}
In this section we estimate from below the hyperbolic entropy of complete submanifolds with $C^1$-regular asymptotic boundary on the ideal boundary, $\partial_\infty \mathbb{H}^n$, in terms of a certain boundary entropy -- i.e., the conformal volume. These computations yield an exact formula when the submanifold is minimal of a dimension for which Theorem \ref{MainThm} holds.

Let us expand on the notation from the introduction.  First, recall the Poincar\'{e} ball model of hyperbolic space, $\mathbb{H}^n$ is the open unit ball in Euclidean space 
$$
\mathbb{B}^n=\set{\mathbf{x}:|\mathbf{x}|<1}\subset \Real^{n}
$$
together with the Poincar\'{e} metric
$$
g_P=4\frac{d\mathbf{x}\otimes d\mathbf{x}}{(1-|\mathbf{x}|^2)^2}=\frac{4}{(1-|\mathbf{x}|^2)^2} g_{\Real^n}.
$$
That is, for any model of hyperbolic space, $(\mathbb{H}^n,g_{\mathbb{H}^n})$, there is an isometry $i: \mathbb{H}^n\to \mathbb{B}^n$ so $i^* g_{P}=g_{\mathbb{H}^n}$. 
The isometries of $g_P$ are given by the M\"{o}bius transforms of $\mathbb{B}^n$ and so this identification is not unique.  In fact, for any point $p_0\in \mathbb{H}^n$, there is an isometry $i:  \mathbb{H}^n\to \mathbb{B}^n$ so $i(p_0)=0$.  Moreover, if $i, j:  \mathbb{H}^n\to \mathbb{B}^n$ satisfy $i(p_0)=j(p_0)=0$, then $i\circ j^{-1}$ is an orthogonal transformation of $\mathbb{B}^n$.  In particular, in this case $i^*g_{\Real^n}=j^*g_{\Real^n}$ while these metrics are different for identifications associated to distinct distinguished points.   In what follows, we will always choose a distinguished point $p_0\in \mathbb{H}^n$ and an identification (i.e., an isometry) $i: \mathbb{H}^n\to \mathbb{B}^n$ with $i(p_0)=0$. 
We use this identification to compactify $\mathbb{H}^n$ and denote the \emph{ideal boundary} of $\mathbb{H}^n$ by $\partial_\infty \mathbb{H}^n$ which is identified with $\mathbb{S}^{n-1}=\partial \mathbb{B}^n$ in the natural way. Extend $i:\bar{\mathbb{H}}^n\to \bar{\mathbb{B}}^n$ in the obvious way.  This compactification is independent, as a manifold with boundary, of the choice of $p_0$ and $i$.

A complete submanifold $\Sigma\subset \mathbb{H}^n$ has \emph{$C^m$-regular asymptotic boundary} for $1\leq m\leq\infty$ if $\Sigma'=i(\Sigma) \subset \mathbb{B}^n$ has the property that its closure $\bar{\Sigma}'$ is a $C^m$-regular manifold with boundary, $\partial \bar{\Sigma}'\subset \mathbb{S}^{n-1}=\partial \mathbb{B}^n$ and $\bar{\Sigma}'$ meets $\mathbb{S}^{n-1}$ orthogonally. Denote by $\partial_\infty \Sigma$ the submanifold corresponding to $\partial \bar{\Sigma}'$ in $\partial_\infty \mathbb{H}^n$.   As M\"{o}bius transformations are smooth and conformal this is a well defined notion independent of choice of identification.

Using the identification, $i$, $\partial_{\infty} \mathbb{H}^n$ has a well defined Riemannian metric induced from $\mathbb{S}^{n-1}=\partial \mathbb{B}^n$.  While this metric depends on $p_0$, it is otherwise independent of the choice of isometry taking $p_0$ to $0$. Let us denote this metric by $g_{\partial_\infty \mathbb{H}^n}^{p_0}$. 
  Clearly, $g_{\partial_\infty \mathbb{H}^{n}}^{p_0}$ and $g_{\partial_\infty \mathbb{H}^{n}}^{q_0}$ are conformal for different choices of distinguished point $p_0$ and $q_0$ and so $\partial_\infty \mathbb{H}^n$ has a well defined conformal structure. In fact, the two metrics are related by a M\"{o}bius transform on the sphere.   Fix a $l$-dimensional $C^m$ submanifold of $\partial_\infty \mathbb{H}^n$ and let $i(\Gamma)\subset \mathbb{S}^{n-1}$ be the corresponding submanifold of the sphere under the identification.  Set
$$
Vol_{\partial_\infty \mathbb{H}^n}(\Gamma,p_0)=Vol_{\mathbb{S}^{n-1}}(i(\Gamma))
$$
If $q_0$ is a different choice of distinguished point, then, in general, it can happen that
$$
Vol_{\partial_\infty \mathbb{H}^n}(\Gamma,p_0)\neq Vol_{\partial_\infty \mathbb{H}^n}(\Gamma,q_0).
$$ 
However, there is a M\"{o}bius transform, $\psi\in \mathrm{Mob}(\mathbb{S}^{n-1})$ so that
$$
Vol_{\partial_\infty \mathbb{H}^n}(\Gamma,q_0)=Vol_{\mathbb{S}^{n-1}}(\psi(i(\Gamma)))
$$
Hence, if we define the \emph{conformal volume}  of $\Gamma\subset \partial_\infty \mathbb{H}^n$ by
$$
\lambda_{c}[\Gamma]=\sup_{\psi\in \mathrm{Mob}(\mathbb{S}^{n-1})} Vol_{\mathbb{S}^{n-1}}(\psi(i(\Gamma))),
$$
then this is a quantity that is well defined independent of the choice distinguished point and of identification.  Indeed,
$$
\lambda_c[\Gamma]=\sup_{p_0\in \mathbb{H}^n} Vol_{\partial_\infty \mathbb{H}^n}(\Gamma,p_0).
$$

One has the following useful calculation that clarifies the meaning of $Vol_{\partial _\infty \mathbb{H}^{n+k}}(\Gamma)$ when $\Gamma$ appears as the asymptotic boundary of a $n$-dimensional submanifold.
\begin{lem}\label{BoundaryLem}
Let $\Sigma \subset \mathbb{H}^{n+k}$ be a $n$-dimensional submanfiold that has $C^1$-regular asymptotic boundary. 
For any $p_0\in \mathbb{H}^{n+k}$,
$$
Vol_{\partial_\infty \mathbb{H}^{n+k}}(\partial_\infty \Sigma; p_0)=\lim_{r\to \infty} \frac{Vol_{\mathbb{H}^{n+k}}(\Sigma\cap \partial B_{r}^{\mathbb{H}^{n+k}}(p_0))}{\sinh^{n-1}(r)}
$$
\end{lem}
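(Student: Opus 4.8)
The plan is to pass to the Poincar\'e ball model of $\mathbb{H}^{n+k}$ normalized so that $p_0$ is the origin, rewrite the quotient on the right as the round volume of a radial projection of the slice $\Sigma\cap\partial B_r^{\mathbb{H}^{n+k}}(p_0)$, and then use the $C^1$-regular asymptotic boundary to identify the limit of these projections with $\Gamma':=i(\partial_\infty\Sigma)\subset\mathbb{S}^{n+k-1}$. Concretely, I would fix the isometry $i\colon\mathbb{H}^{n+k}\to\mathbb{B}^{n+k}$ with $i(p_0)=0$ and work in geodesic polar coordinates about $p_0$, so that $g_{\mathbb{H}^{n+k}}=d\rho\otimes d\rho+\sinh^2(\rho)\,h$ with $h=g_{\mathbb{S}^{n+k-1}}$. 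Then $\partial B_r^{\mathbb{H}^{n+k}}(p_0)$ carries the induced metric $\sinh^2(r)\,h$, and its canonical identification $\pi_r$ with $\mathbb{S}^{n+k-1}$ (which in the ball model is the Euclidean radial projection $\mathbf{x}\mapsto\mathbf{x}/|\mathbf{x}|$ restricted to $\{|\mathbf{x}|=\tanh(r/2)\}$) satisfies $\pi_r^* h=\sinh^{-2}(r)\,g_{\mathbb{H}^{n+k}}|_{\partial B_r^{\mathbb{H}^{n+k}}(p_0)}$. Hence $Vol_{\mathbb{H}^{n+k}}(S)=\sinh^{n-1}(r)\,Vol_h(\pi_r(S))$ for every $(n-1)$-dimensional $S\subset\partial B_r^{\mathbb{H}^{n+k}}(p_0)$, so the quotient in the statement is exactly $Vol_h(\pi_r(\Sigma\cap\partial B_r^{\mathbb{H}^{n+k}}(p_0)))$, while the left-hand side is by definition $Vol_{\mathbb{S}^{n+k-1}}(\Gamma')=Vol_h(\Gamma')$. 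It thus remains to prove $Vol_h(\pi_r(\Sigma\cap\partial B_r^{\mathbb{H}^{n+k}}(p_0)))\to Vol_h(\Gamma')$ as $r\to\infty$.

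Next I would exploit the hypothesis on the asymptotic boundary. Write $\Sigma'=i(\Sigma)$ and note that $\bar\Sigma'=\Sigma'\cup\Gamma'$, being a closed subset of the closed ball, is compact. On the $C^1$ manifold-with-boundary $\bar\Sigma'$ consider $u(\mathbf{x})=|\mathbf{x}|$: it is $C^1$ up to the boundary, satisfies $u<1$ on the interior and $u\equiv1$ on $\Gamma'$, and since $\bar\Sigma'$ meets $\mathbb{S}^{n+k-1}$ orthogonally the radial vector $\mathbf{x}$ is tangent to $\bar\Sigma'$ along $\Gamma'$, so $\nabla_{\bar\Sigma'}u=\mathbf{x}$ is a unit vector there. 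Therefore $u$ has no critical points in a neighbourhood of $\Gamma'$, and by compactness the superlevel sets $\{u\geq s\}$ shrink into arbitrarily small collars of $\Gamma'$ as $s\uparrow1$; in particular, for $r$ large, $r$ is a regular value of $\dist_{\mathbb{H}^{n+k}}(\cdot,p_0)|_\Sigma$, and $\Sigma'\cap\{|\mathbf{x}|=s\}$ is a compact $C^1$ $(n-1)$-submanifold. Finally, applying the $C^1$ implicit function theorem in local graph coordinates for $\bar\Sigma'$ over its tangent planes along $\Gamma'$, one obtains, for $s\in(1-\varepsilon,1]$, $C^1$ local parametrizations of $\Sigma'\cap\{|\mathbf{x}|=s\}$ by pieces of $\Gamma'$ that depend continuously on $s$ in the $C^1$ topology and reduce at $s=1$ to the inclusion $\Gamma'\hookrightarrow\mathbb{S}^{n+k-1}$.

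It then remains to conclude with a Jacobian estimate. With $s=\tanh(r/2)\to1$ as $r\to\infty$, the projected slice $\pi_r(\Sigma\cap\partial B_r^{\mathbb{H}^{n+k}}(p_0))$ is the radial rescaling $s^{-1}(\Sigma'\cap\{|\mathbf{x}|=s\})$ of the slice onto $\mathbb{S}^{n+k-1}$; combining the local parametrizations above with a partition of unity on $\Gamma'$ expresses $Vol_h(\pi_r(\Sigma\cap\partial B_r^{\mathbb{H}^{n+k}}(p_0)))$ as a finite sum of integrals over $\Gamma'$ of Jacobians of $C^1$ maps $(\Gamma',h)\to(\mathbb{S}^{n+k-1},h)$. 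Since these maps converge in $C^1$, uniformly on the compact set $\Gamma'$ (using also $s^{-1}\to1$), to the isometric inclusion as $s\to1$, each integrand tends uniformly to $1$, and passing to the limit yields $Vol_h(\pi_r(\Sigma\cap\partial B_r^{\mathbb{H}^{n+k}}(p_0)))\to Vol_h(\Gamma')$, which proves the lemma. The one genuinely delicate step is the construction of those local parametrizations: extracting from the bare $C^1$-regularity of $\bar\Sigma'$ together with the orthogonality condition parametrizations of the slices that are $C^1$ \emph{up to} $s=1$ and equal the inclusion there, since it is exactly this up-to-the-boundary control of the first derivatives that forces the uniform convergence of the Jacobians; once that is in place the remainder is bookkeeping.
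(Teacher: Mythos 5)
Your argument is correct and takes essentially the same route as the paper: pass to the ball model with $i(p_0)=0$, identify the quotient $Vol_{\mathbb{H}^{n+k}}(\Sigma\cap\partial B_r^{\mathbb{H}^{n+k}}(p_0))/\sinh^{n-1}(r)$ with the round volume of the (rescaled) Euclidean slice of $i(\Sigma)$, and let $r\to\infty$. The only difference is that the paper simply asserts the convergence of these slice volumes to $Vol_{\mathbb{S}^{n+k-1}}(\partial\bar{\Sigma}')$ as following ``from the definition of $C^1$-regular asymptotic boundary,'' whereas you spell out the implicit-function-theorem and Jacobian-convergence argument behind that assertion.
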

\begin{proof}
 Pick an identification $i:\mathbb{H}^{n+k}\to \mathbb{B}^{n+k}$ so that $i(p_0)=0$.  As $i^{*}g_P=g_{\mathbb{H}^{n+k}}$, $i$ is an isometry and so
 $i(\partial B_r^{\mathbb{H}^{n+k}})=\partial B_{r}^{g_P}(0)$.  Furthermore, as the conformal factor of $g_P$ is radial, $\partial B_{r}^{g_P}(0)=\partial B_{s}(0)$ where $r=\ln \left( \frac{1+s}{1-s}\right) $.
 
 Set $\Sigma_r=\Sigma\cap \partial B_{r}^{\mathbb{H}^{n+k}}(p_0)$. 
 and let $\Sigma'=i(\Sigma)$. From the above,   $i(\Sigma_r)=\Sigma'\cap \partial B_s(0)=\Sigma'_s$.  Let $g_r$ be the metric on $\Sigma_r$ induced from $\mathbb{H}^{n+k}$ and $g_{s}'$ be the metric induced on $\Sigma'_s$ from $g_{\Real^{n+k}}$.  Clearly,
 $$
 i^*g_{s}'= \sinh^{-2}(r)(g_r)
 $$
 In particular, as $\Sigma_s'$ is $n-1$-dimensional,
 $$
 Vol_{\Real^{n+k}}(\Sigma_s')= \frac{Vol_{\mathbb{H}^{n+k}}(\Sigma_r)}{\sinh^{n-1}(r) }.
 $$
 The definition of $C^1$-regular asymptotic boundary ensures
 $$
 \lim_{s\to 1} Vol_{\Real^{n+k}}(\Sigma_s')=Vol_{\mathbb{S}^{n+k-1}}(\partial \Sigma' )= Vol_{\partial_\infty \mathbb{H}^{n+k}}(\partial_\infty \Sigma; p_0).
 $$
As $s\to 1$,  $r\to \infty$ and so
 \begin{align*}
 Vol_{\partial_\infty \mathbb{H}^{n+k}}(\partial_\infty \Sigma; p_0) &=
 \lim_{r\to \infty}   \frac{Vol_{\mathbb{H}^{n+k}}(\Sigma_r)}{\sinh^{n-1}(r)}.
 \end{align*}
 This completes the proof.
\end{proof}

Theorem \ref{MobEntropyThm} is a consequence of Theorem \ref{MainThm} and the following proposition:
\begin{prop}\label{LimitProp}
Let $\Sigma\subset \mathbb{H}^{n+k}$ be an $n$-dimensional submanifold with $C^1$-regular asymptotic boundary. 
One has
$$
\lim_{t\to -\infty} \int_{\Sigma} \Phi^{t_0, p_0} (t,p) dVol_{\Sigma}(p)=\frac{Vol_{\partial_\infty \mathbb{H}^{n+k}}(\partial_\infty \Gamma; p_0)}{Vol_{\Real^{n}}(\mathbb{S}^{n-1})}.
$$
\end{prop}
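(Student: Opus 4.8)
The plan is to reduce the integral to one over spheres centered at $p_0$, using the coarea formula, and then identify the limit via Lemma \ref{BoundaryLem} together with the asymptotics of the heat kernel $K_n$ as its time argument tends to $+\infty$. First I would set $\tau = t_0 - t$, so $\tau \to +\infty$, and write
$$
\int_\Sigma \Phi_n^{t_0,p_0}(t,p)\,dVol_\Sigma(p) = \int_\Sigma K_n(\tau,\rho(p))\,dVol_\Sigma(p),
$$
where $\rho(p) = \dist_{\mathbb{H}^{n+k}}(p,p_0)$. Since $\bar\Sigma$ is a $C^1$-regular manifold with boundary meeting $\mathbb{S}^{n+k-1}$ orthogonally, $\Sigma$ has finite volume growth controlled by $\sinh^{n-1}(r)$ near infinity — more precisely, by Lemma \ref{BoundaryLem}, $Vol_{\mathbb{H}^{n+k}}(\Sigma\cap\partial B_r^{\mathbb{H}^{n+k}}(p_0))/\sinh^{n-1}(r)$ converges to $Vol_{\partial_\infty\mathbb{H}^{n+k}}(\partial_\infty\Sigma;p_0)$. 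Applying the coarea formula with the function $\rho$ restricted to $\Sigma$, I obtain
$$
\int_\Sigma K_n(\tau,\rho(p))\,dVol_\Sigma(p) = \int_0^\infty K_n(\tau,r)\left(\int_{\Sigma\cap\partial B_r^{\mathbb{H}^{n+k}}(p_0)} \frac{1}{|\nabla_\Sigma \rho|}\,dVol\right)dr,
$$
and since $|\nabla_\Sigma\rho|\to 1$ as $r\to\infty$ (because $\bar\Sigma$ meets the ideal boundary orthogonally, so $\Sigma$ is asymptotically radial), the inner integral is asymptotic to $Vol_{\mathbb{H}^{n+k}}(\Sigma\cap\partial B_r^{\mathbb{H}^{n+k}}(p_0))$, which in turn is asymptotic to $c_\Gamma \sinh^{n-1}(r)$ with $c_\Gamma = Vol_{\partial_\infty\mathbb{H}^{n+k}}(\partial_\infty\Sigma;p_0)$.

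The heart of the matter is then the limit
$$
\lim_{\tau\to\infty} \int_0^\infty K_n(\tau,r)\,\sinh^{n-1}(r)\,dr \cdot \frac{1}{\,?\,}.
$$
Here I would use that $H_n$ is the heat kernel on $\mathbb{H}^n$, so $\int_{\mathbb{H}^n} H_n(t,p;t_0,p_0)\,dVol_{\mathbb{H}^n}(p) = 1$ for all $t>t_0$; writing this integral in geodesic polar coordinates on $\mathbb{H}^n$ gives
$$
1 = \int_{\mathbb{H}^n} K_n(\tau,\rho)\,dVol_{\mathbb{H}^n} = Vol_{\mathbb{S}^{n-1}}(\mathbb{S}^{n-1})\int_0^\infty K_n(\tau,r)\,\sinh^{n-1}(r)\,dr,
$$
since the volume element on $\mathbb{H}^n$ in polar coordinates about $p_0$ is $\sinh^{n-1}(r)\,dr\,dh_{\mathbb{S}^{n-1}}$. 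Hence $\int_0^\infty K_n(\tau,r)\sinh^{n-1}(r)\,dr = Vol_{\Real^n}(\mathbb{S}^{n-1})^{-1}$ exactly, for every $\tau>0$. Combining this with the coarea computation — and justifying that the ``error'' from replacing the inner integral by $c_\Gamma\sinh^{n-1}(r)$ vanishes in the limit — yields
$$
\lim_{\tau\to\infty}\int_\Sigma K_n(\tau,r(p))\,dVol_\Sigma(p) = \frac{c_\Gamma}{Vol_{\Real^n}(\mathbb{S}^{n-1})} = \frac{Vol_{\partial_\infty\mathbb{H}^{n+k}}(\partial_\infty\Sigma;p_0)}{Vol_{\Real^n}(\mathbb{S}^{n-1})}.
$$

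The main obstacle is the justification of this last limit-swap: one must show that $K_n(\tau,\cdot)\sinh^{n-1}(\cdot)$, as a family of probability-type densities in $r$, concentrates its mass at ever-larger radii as $\tau\to\infty$, so that the convergence $Vol_{\mathbb{H}^{n+k}}(\Sigma\cap\partial B_r(p_0))/\sinh^{n-1}(r)\to c_\Gamma$ (which holds only as $r\to\infty$, with possibly bad behavior for small and moderate $r$) actually propagates to the weighted integral. Concretely, from the Davies–Mandouvalos estimate \eqref{KnDecayEst} one has, for $\tau\geq 1$,
$$
K_n(\tau,r)\,\sinh^{n-1}(r) \leq C_n\,\tau^{-n/2}\,e^{-\frac14(n-1)^2\tau - \frac{r^2}{4\tau}}(1+r+\tau)^{\frac{n-1}{2}-1}(1+r),
$$
after using $\sinh^{n-1}(r)\leq C e^{(n-1)r}$ and absorbing the $e^{-\frac12(n-1)r}$ factor; the Gaussian factor $e^{-r^2/(4\tau)}$ shows the bulk of the mass sits at $r\sim\tau$, which tends to infinity. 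A split of the $r$-integral into $r\leq A$ and $r>A$, with $A$ chosen large so that the inner quotient is within $\varepsilon$ of $c_\Gamma$ on $r>A$, plus the fact that $\int_0^A K_n(\tau,r)\sinh^{n-1}(r)\,dr\to 0$ (by dominated convergence, since the total integral is the constant $Vol_{\Real^n}(\mathbb{S}^{n-1})^{-1}$ while pointwise $K_n(\tau,r)\to 0$), completes the argument. One also needs a uniform-in-$\tau$ integrable dominating function to control the tail contribution where $|\nabla_\Sigma\rho|$ differs from $1$; the finite-volume-growth hypothesis from the $C^1$ asymptotic boundary, together with \eqref{KnDecayEst}, supplies this.
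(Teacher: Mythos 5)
Your proposal is correct and follows essentially the same route as the paper: coarea formula in the radial variable, Lemma \ref{BoundaryLem} plus $|\nabla_\Sigma\rho|\to 1$ to compare the level-set integrals with $\sinh^{n-1}(r)$, the normalization $\int_0^\infty K_n(\tau,r)\sinh^{n-1}(r)\,dr = Vol_{\Real^n}(\mathbb{S}^{n-1})^{-1}$ coming from the heat kernel integrating to $1$ on a totally geodesic $\mathbb{H}^n$ through $p_0$, and the time-decay estimate \eqref{TimeDecayEqn} to kill the contribution from bounded radii. The only cosmetic difference is that the paper applies the coarea formula only on $\Sigma\setminus B_R(p_0)$ (where transversality to the distance spheres is guaranteed) and handles $\Sigma\cap B_R(p_0)$ directly via \eqref{TimeDecayEqn}, which sidesteps the degeneracy of $1/|\nabla_\Sigma\rho|$ at moderate radii that you flag.
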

\begin{proof}
 Let $\Sigma_r=\Sigma\cap \partial B_{r}^{\mathbb{H}^{n+k}}(p_0)$.  Observe that, by the definition of having a $C^1$-regular asymptotic boundary there is an $R_0>0$ so that, for $r\geq R_0$, $\Sigma $ meets $\partial B_r^{\mathbb{H}^{n+k}}(p_0)$ transversally and so $\Sigma_r$ is a smooth $(n-1)$-dimensional submanifold of  $\partial B_r^{\mathbb{H}^{n+k}}(p_0)$.  
 
It follows from Lemma \ref{BoundaryLem} that, for any $\epsilon>0$, there is an $R_\epsilon>R_0$ so for $r>R_\epsilon$
 $$
 (1-\epsilon) { Vol_{\partial_\infty \mathbb{H}^{n+k}}(\partial_\infty \Sigma; p_0)}\leq  \frac{ Vol_{\mathbb{H}^{n+k}}(\Sigma_r)}{{\sinh^{n-1}(r)}}\leq (1+\epsilon) {Vol_{\partial_\infty \mathbb{H}^{n+k}}(\partial_\infty \Sigma; p_0)}.
 $$
 We also observe, that if $\rho=\dist_{\mathbb{H}^{n+k}}(p,p_0)$, then $|\nabla_{\mathbb{H}^{n+k}} \rho|=1$ for $p\neq p_0$.  It follows from the definition of having a $C^1$-regular asymptotic boundary -- specifically the fact that $\Sigma$ is $C^1$ up to $\partial_\infty \mathbb{H}^{n+k}$ and meets this boundary orthogonally, that
 $$
 \lim_{p\to \infty} |\nabla_\Sigma \rho|=1.
 $$
 In particular, there is a $R_\epsilon'>R_\epsilon$ so that for $p\in \Sigma\setminus B_{R_\epsilon'}^{\mathbb{H}^{n+k}}(p_0)$,
 $$
 1\leq \frac{1}{|\nabla_\Sigma \rho|}\leq 1+\epsilon.
 $$
 
 Hence, by the co-area formula, for $R>R_\epsilon'$, one has
 \begin{align*}
 (1-\epsilon) Vol_{\partial_\infty\mathbb{H}^{n+k}}(\partial_\infty \Sigma; p_0) &
 \int_R^\infty K_n(t_0-t, r)  \sinh^{n-1}(r) dr\\
 & \leq \int_{\Sigma\setminus B_R^{\mathbb{H}^{n+k}}(p_0)} \Phi_n^{t_0,p_0}(t,p) dVol_\Sigma(p)
\end{align*}
 and
\begin{align*}
 \int_{\Sigma\setminus B_R^{\mathbb{H}^{n+k}}(p_0)} & \Phi_n^{t_0,p_0}(t,p) dVol_\Sigma(p)\\
 &\leq   (1+\epsilon)^2 Vol_{\partial_\infty\mathbb{H}^{n+k}}(\partial_\infty \Sigma; p_0) \int_R^\infty K_n(t_0-t, r)  \sinh^{n-1}(r) dr.
\end{align*}
 
 Now suppose for the moment that $\Sigma=\mathbb{H}^n$ where $\mathbb{H}^n$ is the totally geodesic hyperbolic plane going through $p_0$. As $ \Phi_n^{t_0,p_0}(t,p) $ restricts to the backwards heat kernel on $\mathbb{H}^n$, one has, for all $t<t_0$,
 $$
 \int_{\mathbb{H}^n} \Phi_n^{t_0,p_0}(t,p) dVol_{\mathbb{H}^n}(p)=1.
 $$
 By \eqref{TimeDecayEqn}, the uniform decay in time of the heat kernel on bounded sets,  one has
 $$
 \lim_{t\to -\infty} \int_{\mathbb{H}^n\cap \bar{B}_R^{\mathbb{H}^{n+k}}(p_0)}  \Phi_n^{t_0,p_0}(t,p) dVol_{\mathbb{H}^n}(p)=0
 $$
 Hence, for any $R>0$,
 $$
 \lim_{t\to -\infty} \int_{\mathbb{H}^n\setminus {B}_R^{\mathbb{H}^{n+k}}(p_0)}  \Phi_n^{t_0,p_0}(t,p) dVol_{\mathbb{H}^n}(p)=1.
 $$
The co-area formula and fact that $\mathbb{H}^n$ is a cone over $p_0$ implies
 \begin{align*}
  \int_{\mathbb{H}^n\setminus {B}_R^{\mathbb{H}^{n+k}}(p_0)} & \Phi_n^{t_0,p_0}(t,p) dVol_{\mathbb{H}^n}(p) =\int_R^\infty K_n(t_0-t, r) Vol_{\mathbb{H}^{n}}(\partial {B}_r^{\mathbb{H}^{n}}(p_0)) dr\\
&=  Vol(\partial_\infty \mathbb{H}^n; p_0)\int_R^\infty K_n(t_0-t, r) \sinh^{n-1}(r) dr\\
  &=Vol_{\Real^{n}}(\mathbb{S}^{n-1})\int_R^\infty K_n(t_0-t, r) \sinh^{n-1}(r) dr.
\end{align*}
Here the final equality used that $\mathbb{H}^n$ goes through $p_0$.
Hence, for any $R>0$, 
$$
\lim_{t\to -\infty} \int_R^\infty K_n(t_0-t, r) \sinh^{n-1}(r) dr=\frac{1}{Vol_{\Real^{n}}(\mathbb{S}^{n-1})}.
$$
It follows that,
\begin{align*}
 (1-\epsilon) \frac{ Vol(\partial_\infty \Sigma; p_0) }{Vol_{\Real^{n}}(\mathbb{S}^{n-1})}&\leq \liminf_{t\to -\infty} \int_{\Sigma\setminus B_R^{\mathbb{H}^{n+k}}(p_0)} \Phi_n^{t_0,p_0}(t,p) dVol_\Sigma(p)\\
 &=\liminf_{t\to -\infty} \int_{\Sigma} \Phi_n^{t_0,p_0}(t,p) dVol_\Sigma(p)\\
\end{align*}
 where the second equality again uses \eqref{TimeDecayEqn}.
Hence, we may send $\epsilon \to 0$ and obtain,
$$
 \frac{ Vol(\partial_\infty \Sigma; p_0) }{Vol_{\Real^{n}}(\mathbb{S}^{n-1})}\leq \liminf_{t\to -\infty} \int_{\Sigma} \Phi_n^{t_0,p_0}(t,p) dVol_\Sigma(p)
$$
In the exact same way, one obtains
$$
 \limsup_{t\to -\infty} \int_{\Sigma} \Phi_n^{t_0,p_0}(t,p) dVol_\Sigma(p)\leq
\frac{ Vol(\partial_\infty \Sigma; p_0) }{Vol_{\Real^{n}}(\mathbb{S}^{n-1})}
$$
This proves that
$$
\lim_{t\to -\infty} \int_{\Sigma} \Phi_n^{t_0,p_0}(t,p) dVol_\Sigma(p)=
\frac{ Vol(\partial_\infty \Sigma; p_0) }{Vol_{\Real^{n}}(\mathbb{S}^{n-1})}
$$
verifying the main claim.  
\end{proof}

We may now prove Theorem \ref{MobEntropyThm}.
\begin{proof}[Proof of Theorem \ref{MobEntropyThm}.]
By definition, for any fixed $p_0\in \mathbb{H}^{n+k}$,
$$
\lambda_{\mathbb{H}}[\Sigma]\geq \limsup_{t\to -\infty} \int_{\Sigma} \Phi^{0, p_0}_n (t,p) dVol_{\Sigma}(p).
$$
Hence, Proposition \ref{LimitProp} implies
$$
\lambda_{\mathbb{H}}[\Sigma]\geq  \frac{Vol_{\partial_\infty \mathbb{H}^n}(\partial_\infty \Gamma; p_0)}{Vol_{\Real^{n}}(\mathbb{S}^{n-1})}.
$$
Taking the supremum over $p_0\in\mathbb{H}^{n+k}$ yields, 
$$
\lambda_{\mathbb{H}}[\Sigma]\geq\frac{\lambda_{c}[\Gamma]}{Vol_{\Real^{n}}(\mathbb{S}^{n-1})}.
$$
This proves the first claim.  To see the second claim, observe that if $\Sigma$ is minimal, then we may think of $\Sigma$ as a static solution of mean curvature flow. It readily follows from Lemma \ref{BoundaryLem} that $\Sigma$ has exponential volume growth. Hence, by Theorem \ref{MainThm}, for all $\tau>0$
\begin{align*}
 \int_{\Sigma} \Phi^{0, p_0} (-\tau,p) dVol_{\Sigma}(p) &\leq \lim_{t\to -\infty} \int_{\Sigma} \Phi^{0, p_0} (t-\tau,p) dVol_{\Sigma}(p)\\
 &= \frac{Vol_{\partial_\infty \mathbb{H}^{n+k}}(\partial_\infty \Gamma; p_0)}{Vol_{\Real^{n}}(\mathbb{S}^{n-1})}\leq \frac{\lambda_{c}[\Gamma]}{Vol_{\Real^{n}}(\mathbb{S}^{n-1})}.
\end{align*}
Taking the supremum over $\tau>0$ and $p_0\in \mathbb{H}^{n+k}$ yields
$$
\lambda_{\mathbb{H}}[\Sigma]\leq \frac{\lambda_{c}[\Gamma]}{Vol_{\Real^{n}}(\mathbb{S}^{n-1})}.
$$
Combined with the first claim this completes the proof.
\end{proof}

Finally, it is enlightening to introduce the function 
$$
s=2\frac{1-|\mathbf{x}|}{1+|\mathbf{x}|}
$$
on $\mathbb{B}^{n+k} \backslash\set{0}$. One verifies that $s$ is a boundary defining function and,
in appropriate associated  coordinates,  the Poincar\'{e} metric has the form
$$
g_P= s^{-2} \left( ds^2+ \left(1-\frac{s^2}{4}\right)^{2} g_{\mathbb{S}^{n+k-1}}\right).
$$
Hence, if $i$ is an identification of $\mathbb{H}^{n+k}$ with $\mathbb{B}^{n+k}$ sending $p_0$ to $0$ and $\sigma=s\circ i$, then $\sigma^2 g_{\mathbb{H}^{n+k}}$ is a conformal compactification in the sense of \cite{grahamConformalAnomalySubmanifold1999}.
Suppose $\Sigma\subset \mathbb{H}^{n+k}$ is a $n$-dimensional minimal submanifold that has $C^\infty$-regular asymptotic boundary. Using a boundary defining function like $\sigma$,  Graham and Witten \cite{grahamConformalAnomalySubmanifold1999} showed that, when $n$ is even,
\begin{align*}
Vol_{\mathbb{H}^{n+k}}(\Sigma\cap B_{r(\epsilon)}(p_0))&=Vol_{\mathbb{H}^{n+k}}(\Sigma\cap \set{\sigma>\epsilon})\\
&=c_0 \epsilon^{-n+1}+c_2 \epsilon^{-n+3}+\cdots+c_1 \epsilon^{-1} +c_0 +o(1)
\end{align*}
while if $n$ is odd, then
\begin{align*}
Vol_{\mathbb{H}^{n+k}}(\Sigma\cap B_{r(\epsilon)}(p_0))&=
Vol_{\mathbb{H}^{n+k}}(\Sigma\cap \set{\sigma>\epsilon})\\
&= c_0 \epsilon^{-n+1}+c_2 \epsilon^{-n+3}+\cdots+c_1 \epsilon^{-1} +d \log \frac{1}{\epsilon} +c_0 +o(1).
\end{align*}
Here $r(\epsilon)=-\ln\left(\frac{\epsilon}{2}\right)$.
They further showed that, when $n$ is even, $c_0$ is independent of the choice of $\sigma$ (and so independent of choice of $p_0$)  and the same is true of $d$ when $n$ is odd. When $n=2$, this quantity is precisely the renormalized area considered in \cite{Alexakis2010}. This expansion is also the (Riemannian) analog of the entropy considered by Ryu and Takayanagi \cite{Ryu2006, Ryu2006a}. One observes that
$$
c_0=Vol_{\partial_{\infty}\mathbb{H}^{n+k}}(\partial_\infty \Sigma,p_0)
$$
so the conformal volume is related as the leading order behavior of this expansion.  Comparing with \cite{BWRelEnt} and \cite{BWTopUniq} it seems the renormalized area is analogous to the relative entropy in some respects which is what motivates Conjecture \ref{MainConj}

\appendix
\section{Elementary properties of $\lambda_{\mathbb{H}}$}
For the readers convenience, we collect here some elementary properties of the hyperbolic entropy.

Let $\mathbb{H}^n\subset \mathbb{H}^{n+k}$ be a totally geodesic copy of hyperbolic space.
\begin{lem}
	\label{HyperplaneEntropyLem}
	One has $\lambda_{\mathbb{H}}[\mathbb{H}^n]=1$.
\end{lem}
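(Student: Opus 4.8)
The plan is to evaluate the defining supremum
$$
\lambda_{\mathbb{H}}[\mathbb{H}^n]=\sup_{p_0\in \mathbb{H}^{n+k},\,\tau>0} \int_{\mathbb{H}^n} \Phi_n^{0,p_0}(-\tau,p)\,dVol_{\mathbb{H}^n}(p)
$$
directly, splitting into two cases according to whether $p_0$ lies on the totally geodesic $\mathbb{H}^n$ or not. First I would treat the case $p_0\in \mathbb{H}^n$. Here $\Phi_n^{0,p_0}(-\tau,p)=K_n(\tau,\dist_{\mathbb{H}^{n+k}}(p,p_0))=K_n(\tau,\dist_{\mathbb{H}^n}(p,p_0))$ since $\mathbb{H}^n$ is totally geodesic, so distances within $\mathbb{H}^n$ agree with those in $\mathbb{H}^{n+k}$. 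Thus the integrand restricts to precisely the backwards heat kernel on $\mathbb{H}^n$ based at $p_0$, and since the heat kernel on a complete manifold of this type is stochastically complete (conservation of mass),
$$
\int_{\mathbb{H}^n} K_n(\tau,\dist_{\mathbb{H}^n}(p,p_0))\,dVol_{\mathbb{H}^n}(p)=1
$$
for every $\tau>0$. This gives the value $1$ for all such $p_0$.

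Next I would handle $p_0\notin \mathbb{H}^n$. Let $q_0\in \mathbb{H}^n$ be the nearest-point projection of $p_0$ onto $\mathbb{H}^n$ and let $a=\dist_{\mathbb{H}^{n+k}}(p_0,\mathbb{H}^n)>0$. For $p\in \mathbb{H}^n$, the hyperbolic Pythagorean theorem for the totally geodesic $\mathbb{H}^n$ gives $\cosh\dist(p,p_0)=\cosh a\,\cosh\dist_{\mathbb{H}^n}(p,q_0)\geq \cosh\dist_{\mathbb{H}^n}(p,q_0)$, hence $\dist(p,p_0)\geq \dist_{\mathbb{H}^n}(p,q_0)$ with strict inequality. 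Since $K_n(\tau,\rho)$ is strictly decreasing in $\rho$ (which follows from the Millison relation \eqref{MillisonEqn}: $\partial_\rho K_n = -2\pi\sinh(\rho)e^{(n-2)t}K_{n+2}<0$, and $K_1,K_3$ are manifestly decreasing, so $K_m$ is decreasing for all $m$ by the same relation), we get
$$
\int_{\mathbb{H}^n}\Phi_n^{0,p_0}(-\tau,p)\,dVol_{\mathbb{H}^n}(p)=\int_{\mathbb{H}^n}K_n(\tau,\dist(p,p_0))\,dVol_{\mathbb{H}^n}(p)< \int_{\mathbb{H}^n}K_n(\tau,\dist_{\mathbb{H}^n}(p,q_0))\,dVol_{\mathbb{H}^n}(p)=1.
$$
Combining the two cases, the supremum over $p_0$ and $\tau$ equals $1$ and is attained exactly when $p_0\in\mathbb{H}^n$.

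The main obstacle — and the only point requiring care — is justifying the conservation of mass identity $\int_{\mathbb{H}^n}K_n(\tau,\dist(p,p_0))\,dVol=1$; this is standard for the heat kernel on hyperbolic space, but one should cite it (e.g. via the explicit spherical-harmonic/Fourier construction in \cite{daviesHeatKernelBounds1988}, or the general stochastic completeness of Cartan–Hadamard manifolds of pinched negative curvature) rather than reprove it. A secondary bookkeeping point is the monotonicity of $\rho\mapsto K_n(\tau,\rho)$, which I would record as a one-line consequence of \eqref{MillisonEqn} together with positivity of $K_{n+2}$; alternatively, for the statement as given it suffices to note $K_n>0$ and integrate, getting $\lambda_{\mathbb{H}}[\mathbb{H}^n]\geq 1$ trivially and $\leq 1$ from the case $p_0\in\mathbb{H}^n$ plus the pointwise domination $\dist(p,p_0)\geq \dist_{\mathbb{H}^n}(p,q_0)$ and monotonicity. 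Everything else is routine.
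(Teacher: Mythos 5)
Your proposal is correct and follows essentially the same route as the paper: reduce to the upper bound, use conservation of mass for the heat kernel when $p_0\in\mathbb{H}^n$, and for $p_0\notin\mathbb{H}^n$ compare with the nearest point $p_0'\in\mathbb{H}^n$ via the hyperbolic law of cosines (your Pythagorean identity is the right-angle case of the same fact) together with the radial monotonicity of $K_n$ coming from Millison's relation \eqref{MillisonEqn}. The only discrepancies are cosmetic (a constant in your rewriting of \eqref{MillisonEqn}), and they do not affect the sign argument.
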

\begin{proof}
	One clearly has $\lambda_{\mathbb{H}}[\mathbb{H}^n]\geq 1$ so we only have to prove the reverse inequality.  To that end, by the Millison's identity \eqref{MillisonEqn} and the positivity of the heat kernel $\partial_\rho K_n= -e^{nt} \sinh(\rho)K_{n+2} \leq 0$ so $\Phi_n^{0,p_0}$ is radially decreasing. 
	Given $p_0\in \mathbb{H}^n\subset \mathbb{H}^{n+k}$ one clearly has
	$$
	\int_{\mathbb{H}^n} \Phi_n^{0,p_0}(t,p) dVol_{\mathbb{H}^n}(p)=1
	$$
	while for $p_0\not \in \mathbb{H}^n$, let $p_0'\in \mathbb{H}^n$ be the nearest point in $\mathbb{H}^n$ to $p_0$.  For $p\in \mathbb{H}^n$ it follows from the hyperbolic law of cosines that $\dist_{\mathbb{H}^{n+k}}(p,p_0')\leq \dist_{\mathbb{H}^{n+k}}(p,p_0)$ and so for all $p\in \mathbb{H}^n$
	$$
	\Phi_n^{0, p_0}(t,p )=K_n(t, \dist_{\mathbb{H}^{n+k}}(p,p_0))\leq K_n(t, \dist_{\mathbb{H}^{n+k}}(p,p_0'))=\Phi_n^{0, p_0'}(t,p)
	$$
	and the claim follows.
\end{proof}
We also verify that the hyperbolic entropy is finite on closed submanifolds
\begin{lem}
	If $\Sigma\subset \mathbb{H}^{n+k}$ is a $n$-dimensional closed submanifold, then there is a $p_0, \tau$ so
	$$\lambda_{\mathbb{H}}[\Sigma]=\int_{\Sigma} \Phi_n^{0,p_0}(-\tau,p)dVol_\Sigma(p)<\infty.
	$$
\end{lem}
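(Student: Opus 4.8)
The plan is to show that the function
$\Psi(p_0,\tau):=\int_{\Sigma}\Phi_n^{0,p_0}(-\tau,p)\,dVol_\Sigma(p)=\int_\Sigma K_n\big(\tau,\dist_{\mathbb{H}^{n+k}}(p,p_0)\big)\,dVol_\Sigma(p)$
on $\mathbb{H}^{n+k}\times(0,\infty)$ is continuous, uniformly bounded, and attains its supremum; then $\lambda_{\mathbb{H}}[\Sigma]=\sup\Psi$ is finite and realized by some $(p_0,\tau)$. Continuity is immediate from dominated convergence, since $\Sigma$ is compact and $K_n$ is smooth and positive on $(0,\infty)\times(0,\infty)$.

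\emph{Finiteness.} I would estimate $\Psi$ on three regions. Since $K_n(\tau,\cdot)$ is radially non-increasing, by Millison's identity \eqref{MillisonEqn} and positivity of the heat kernel (as in the proof of Lemma \ref{HyperplaneEntropyLem}), one has $\Psi(p_0,\tau)\le Vol_{\mathbb{H}^{n+k}}(\Sigma)\,K_n(\tau,0)$, and \eqref{KnDecayEst} gives $K_n(\tau,0)\to 0$ as $\tau\to\infty$; hence there is $T_1$ with $\Psi<1$ for $\tau\ge T_1$, uniformly in $p_0$. Likewise $\Psi(p_0,\tau)\le Vol_{\mathbb{H}^{n+k}}(\Sigma)\,K_n(\tau,\dist(p_0,\Sigma))$, and \eqref{KnDecayEst} shows $\sup_{\tau>0}K_n(\tau,\rho)\to 0$ as $\rho\to\infty$ (the factor $e^{-\frac12(n-1)\rho}$ beats the polynomial-in-$\rho$ size of the $\tau$-maximum of $\tau^{-n/2}e^{-\rho^2/4\tau}$), so there is $R_1$ with $\Psi<1$ whenever $\dist(p_0,\Sigma)\ge R_1$, uniformly in $\tau$. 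On the remaining region $\{\dist(p_0,\Sigma)\le R_1\}\times(0,T_1]$ I would use the uniform local volume bound $Vol_{\mathbb{H}^{n+k}}(\Sigma\cap B_r^{\mathbb{H}^{n+k}}(q))\le C_0 r^n$ for $r\le r_0$, valid because the compact smooth $\Sigma$ has bounded geometry: comparing $K_n$ with the Euclidean heat kernel at scales $\sqrt\tau\le r_0$ and integrating by parts in the coarea variable, $\int_{\Sigma\cap B_{r_0}(p_0)}K_n(\tau,\rho)\,dVol_\Sigma$ is bounded by a $\tau$-independent constant, while $\Sigma\setminus B_{r_0}(p_0)$ contributes at most $Vol_{\mathbb{H}^{n+k}}(\Sigma)\,K_n(\tau,r_0)$, which is bounded for $\tau\le T_1$. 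Combining the three regions yields $\lambda_{\mathbb{H}}[\Sigma]<\infty$.

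\emph{Attainment.} First, $\lambda_{\mathbb{H}}[\Sigma]\ge 1$: for $p_0\in\Sigma$, the $\tau\to 0$ concentration of $K_n$ together with the graphical description of $\Sigma$ near $p_0$ gives $\Psi(p_0,\tau)\to 1$. Refining the last estimate, once $\sqrt\tau\log(1/\tau)$ is smaller than the distance between distinct sheets of the embedded $\Sigma$ only the single nearest sheet carries appreciable mass, and the standard Gaussian computation gives $\Psi(p_0,\tau)\le e^{-\dist(p_0,\Sigma)^2/4\tau}+o(1)\le 1+o(1)$ uniformly in $p_0$; hence $\limsup_{\tau\to 0}\sup_{p_0}\Psi(p_0,\tau)=1$. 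Granting the strict inequality $\lambda_{\mathbb{H}}[\Sigma]>1$, a maximizing sequence $(p_i,\tau_i)$ must then eventually lie in the compact set $\{\dist(\cdot,\Sigma)\le R_1\}\times[\tau_*,T_1]$ for some $\tau_*>0$ (by the $\tau\to\infty$, $\dist(p_0,\Sigma)\to\infty$, and $\tau\to 0$ bounds); passing to a subsequential limit and using continuity of $\Psi$ produces a pair $(p_0,\tau)$ with $\Psi(p_0,\tau)=\lambda_{\mathbb{H}}[\Sigma]$, as claimed.

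\emph{Main obstacle.} The crux is the strict inequality $\lambda_{\mathbb{H}}[\Sigma]>1$ for closed $\Sigma$. When $n=1$ it is immediate: Taylor expanding in $\tau$, and using that $\Psi\equiv 1$ on a totally geodesic $\mathbb{H}^n$ to cancel the ambient-curvature corrections, gives $\Psi(p_0,\tau)=1+\tau\,|A_\Sigma(p_0)|^2+O(\tau^2)$ for $p_0\in\Sigma$, and a closed curve is not a geodesic, so $A_\Sigma\not\equiv 0$. For $n\ge 2$ the analogous coefficient at points of $\Sigma$ is $\tfrac12|A_\Sigma|^2-\tfrac14|\mathbf H_\Sigma|^2$, which can be non-positive (e.g. on geodesic spheres), so one must instead test at points off $\Sigma$: taking $q_0\in\mathbb{H}^{n+k}$ and $q\in\Sigma$ realizing $\max_\Sigma\dist(\cdot,q_0)$, so $\Sigma$ lies in a geodesic ball tangent to it at $q$, a direct evaluation of $\sup_\tau\Psi$ near $q_0$ (the center of a geodesic sphere being the model case, where the supremum approaches $\lambda[\mathbb{S}^n]>1$) gives a value $>1$; alternatively, when $n<N$ one may run the mean curvature flow from $\Sigma$, which is singular in finite time, and apply Theorem \ref{MainThm} to reduce to the fact that self-shrinkers have entropy exceeding $1$. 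Once $\lambda_{\mathbb{H}}[\Sigma]>1$ is in hand, the compactness argument above completes the proof.
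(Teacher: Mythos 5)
Your overall strategy is the same as the paper's: use \eqref{KnDecayEst} and \eqref{TimeDecayEqn} to show that $\Psi(p_0,\tau)=\int_\Sigma \Phi_n^{0,p_0}(-\tau,\cdot)\,dVol_\Sigma$ is uniformly small when $p_0$ is far from $\Sigma$ or $\tau$ is large, control the regime $\tau\to 0$, and then realize the supremum over the remaining compact set of parameters by continuity. Your treatment of the first two regimes, and your uniform bound for small $\tau$ via the local volume bound and a coarea argument, are correct and in fact more quantitative than the paper's, which only records the pointwise statement $\lim_{\tau\to 0}\Psi(p_0,\tau)\leq 1$ before restricting to a compact parameter set.

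The gap is exactly where you flag it: your argument needs the strict inequality $\lambda_{\mathbb{H}}[\Sigma]>1$ to rule out the supremum being approached only as $\tau\to 0$ (where, as you note, $\sup_{p_0}\Psi\to 1$), and you do not prove it. The $n=1$ Taylor expansion is plausible, but for $n\geq 2$ the ``enclosing geodesic ball'' argument fails as stated: knowing $\Sigma\subset \bar{B}_R^{\mathbb{H}^{n+k}}(q_0)$ with a single tangency gives no lower bound on $\Psi(q_0,\tau)$ --- a small closed $\Sigma$ lying far from $q_0$ has $\Psi(q_0,\tau)$ small for \emph{every} $\tau$, so nothing forces $\sup_\tau \Psi(q_0,\tau)>1$; you are implicitly substituting the model geodesic sphere for a general $\Sigma$. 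The mean curvature flow route requires $n<N$ (only known for $N\geq 4$) plus a density gap at singularities of hyperbolic flows, neither of which is available at the level of generality of this appendix lemma. So as written the proof is conditional on an unproved claim. To be fair, the paper's own proof is equally terse at this exact point (the ``as such'' following the pointwise $\tau\to 0$ bound silently assumes the supremum is not attained only in that limit), so you have put your finger on the genuine subtlety; but you have not closed it, and closing it is the substance of the attainment claim.
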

\begin{proof}
	Fix a point $p_1\in \mathbb{H}^{n+k}$.  As $\Sigma$ is closed, there is an $R_0$ so $\Sigma\subset B_{R_0}^{\mathbb{H}^{n+k}}(p_1)$.  
	By the decay estimates on $K_n$ of \ref{KnDecayEst} one has for any $\epsilon>0$ an $R_\epsilon>R_0$, so if $p_0\in  \mathbb{H}^{n+k}\setminus  B_{R_0}^{\mathbb{H}^{n+k}}(p_1)$ then for any $\tau>0$ and $t_0\in \Real$
	$$
	\int_{\Sigma} \Phi_{n}^{0,p_0} (-\tau,p) dVol_{\Sigma}(p)< \epsilon.
	$$
	Similarly, by \eqref{TimeDecayEqn} for $p_0\in  B_{R_0}^{\mathbb{H}^{n+k}}(p_1)$ one has
$$
\lim_{\tau\to \infty} 	\int_{\Sigma} \Phi_{n}^{0,p_0} (-\tau,p) dVol_{\Sigma}(p)=0.
$$
As $\Sigma$ is smooth, one has, for any fixed $p_0$, that
$$
\lim_{\tau\to 0} 	\int_{\Sigma} \Phi_{n}^{0,p_0} (-\tau,p) dVol_{\Sigma}(p)\leq 1.
$$
As such, there is a $\tau_1\in (0,1)$ and an $R_1>R_0$ so
$$
\lambda_{\mathbb{H}}[\Sigma]=\sup_{p_0\in  \bar{B}_{R_1}^{\mathbb{H}^{n+k}}(p_1), \tau\in [\tau_1, \tau_1^{-1}]} \int_{\Sigma} \Phi_{n}^{0,p_0} (-\tau,p) dVol_{\Sigma}(p).
$$
As this is the supremem over a compact set of a continuous function the supremum is achieved and has a finite value.
\end{proof}
\bibliographystyle{hamsabbrv}
\bibliography{Library}
\end{document}